\newenvironment{algorithm-hbox}{\hbadness=10000\begin{algorithm}}{\end{algorithm}}
\theoremstyle{plain}
\newtheorem{theorem}{Theorem}
\newtheorem{guess}[theorem]{Conjecture}
\newtheorem{lemma}[theorem]{Lemma}
\newtheorem{corollary}[theorem]{Corollary}
\newtheorem*{observation}{Observation}
\newtheorem*{definition}{Definition}
\numberwithin{equation}{section}
\newenvironment{enumeratei-continued}{\begin{enumerate}[label=\textup{(\roman*)}, noitemsep, topsep=1.5mm, labelindent=.8em, leftmargin=*, widest=.]}{\end{enumerate}}
\renewenvironment{itemize}{\begin{enumerate}[label=$*$, noitemsep, topsep=1.5mm, labelindent=.8em, leftmargin=*, widest=.]}{\end{enumerate}}
\newcommand{\set}[1]{\{#1\}}
\newcommand{\norm}[1]{{|#1|}}
\renewcommand{\epsilon}{\varepsilon}
\newcommand{\case}[2]{\noindent \textbf{Case #1.} #2}
\newcommand{\claim}[2]{\noindent \textbf{Claim #1.} #2}
\renewcommand{\leq}{\leqslant}
\renewcommand{\geq}{\geqslant}
\renewcommand{\le}{\leqslant}
\renewcommand{\ge}{\geqslant}
\newcommand{\NN}{\mathbb{N}}
\newcommand{\olch}{{\rm  ch}^{ \rm OL}}
\DeclareMathOperator{\ch}{ch}
\DeclareMathOperator{\choice}{ch^{OL}}
\DeclareMathOperator{\chnol}{ch}
\DeclareMathOperator{\col}{col}
\begin{document}
%\title[On-line list colouring of $K_{3\star k}$]{On-line list colouring of $K_{3\star k}$}
\title[Towards on-line Ohba's conjecture]{Towards on-line Ohba's conjecture}
%\title{On  chromatic paintable graphs}
\author{Jakub Kozik}
\address{Theoretical Computer Science Department, Faculty of Mathematics and
Computer Science, Jagiellonian University, Krak\'{o}w, Poland}
\email{jkozik@tcs.uj.edu.pl}
\author{Piotr Micek}
\address{Theoretical Computer Science Department, Faculty of Mathematics and
Computer Science, Jagiellonian University, Krak\'{o}w, Poland}
\email{piotr.micek@tcs.uj.edu.pl}
\author{Xuding Zhu}
\address{Department of Mathematics, Zhejiang Normal University, China, and
Department of Applied Mathematics, National Sun Yat-sen University, Taiwan}
\email{xudingzhu@gmail.com}
\date{\today}
\keywords{on-line list colouring, Ohba's conjecture, chromatic-choosable}
\thanks{
This research is supported by: NSFC No. 11171730 and  ZJNSF No. Z6110786, and Polish National Science Center UMO-2011/03/D/ST6/01370.}

\begin{abstract}
The on-line choice number of a graph is a variation of the choice number  defined through a two person game.
It is at least as large as the choice number for all graphs and is strictly larger for some graphs. In particular, there are
graphs $G$ with $|V(G)| = 2 \chi(G)+1$ whose on-line choice numbers are larger than their chromatic numbers,
in contrast to a recently confirmed conjecture of Ohba that
every graph $G$ with $|V(G)| \le 2 \chi(G)+1$
has its choice number equal its chromatic number.
Nevertheless, an on-line version of Ohba conjecture was proposed   in
[P. Huang, T.  Wong and X. Zhu,  Application of polynomial method to on-line colouring of graphs, European J. Combin., 2011]:
Every graph $G$ with $|V(G)| \le 2 \chi(G)$ has its  on-line choice number equal its chromatic number.
This paper confirms the on-line version of Ohba conjecture  for graphs $G$ with independence number at most $3$.
We also study list colouring of  complete multipartite graphs $K_{3\star k}$ with all parts of size $3$.
We prove that the on-line choice number of $K_{3 \star  k}$ is at most $\frac{3}{2}k$,
and present an alternate proof of Kierstead's result that its choice number is $\lceil (4k-1)/3 \rceil$.
For general graphs $G$, we prove that if $|V(G)| \le \chi(G)+\sqrt{\chi(G)}$ then its  on-line choice number  equals chromatic number.
\end{abstract}

\maketitle

\section{Introduction}

\noindent A {\em list assignment } of a graph $G$ is a mapping $L$ which
assigns to each vertex $v$ a set $L(v)$ of permissible colours. An {\em
$L$-colouring} of $G$ is a proper vertex colouring   of $G$ which colours each vertex with
one of its permissible colours. We say that $G$ is {\em
$L$-colourable} if there exists an $L$-colouring of $G$. A graph $G$
is called $k$-choosable if for any list assignment  $L$ with
$|L(v)|=k$ for all $v \in V(G)$, $G$ is $L$-colourable. More
generally, for a function  $f: V(G) \to \NN$, we say $G$ is   {\em
$f$-choosable} if for every list assignment  $L$ with $|L(v)|=f(v)$ for all $v \in V(G)$,
$G$ is $L$-colourable.   The {\em choice number} $\ch(G)$ of $G$ is
the minimum $k$ for which $G$ is $k$-choosable.   List colouring of
graphs has been studied extensively in the literature  (cf. \cite{Vizing76,ERT79,tuzasurvey}).

A list assignment  of a graph $G$ can be given alternatively as follows:
Without loss of generality, we may assume that $\cup_{v \in V(G)}
L(v) = \{1,2,\ldots, q\}$ for some integer $q$. For $i=1,2,\ldots, q$, let $V_i = \{v: i
\in L(v)\}$. The sequence $(V_1,V_2, \ldots, V_q)$ is   another
way of specifying the list assignment.   An $L$-colouring of
$G$ is equivalent to a sequence $(X_1, X_2, \ldots, X_q)$ of
independent sets that form a partition of $V(G)$ and such that $X_i
\subseteq V_i$ for $i=1,2,\ldots, q$. This point of view of list colouring motivates the definition of the following
list colouring game on a graph $G$, which was introduced in \cite{Schauz4,Schauz}.

\begin{definition}
Given a finite graph $G$ and a mapping $f: V(G) \to \mathbb{N}$, two players, the Lister and the Painter,
 play the following game.
In the $i$-th step, the Lister presents a non-empty subset $V_i$ of $V(G) \setminus \cup_{j=1}^{i-1}X_j$,
and  the Painter chooses an independent set $X_i$ contained in $V_i$.
If $v \in V_i$, then we say colour $i$ is a permissible colour of vertex $v$.
If $v \in X_i$, then we say $v$ is coloured by colour $i$.
If for some integer
$m$, there is a vertex $v$  which  is uncoloured at the end of the $m$th step and is contained in
$f(v)$ of the $V_j$'s with $j \le m$,  then the game ends and the Lister wins the game. Otherwise, at some
step, all vertices are coloured, the game ends and  the Painter  wins the game.
\end{definition}

We call such a game the {\em on-line $(G,f)$-list colouring game}.
 We say $G$ is {\em on-line $f$-choosable} if the Painter
has a winning strategy in the on-line $(G,f)$-list   colouring game, and we say
   $G$ is  {\em on-line $k$-choosable}
if $G$ is on-line $f$-choosable for the constant function $f
\equiv k$. The {\em on-line choice number} of $G$, denoted by $\olch(G)$, is the
minimum $k$ for which $G$ is on-line $k$-choosable.

%%%%%%%%%%%%%%%%%%%%%%%%%%%%%%%%%%%%%%%%%%%%%%%%%%%%%%%%%%%%%%%%%%%%%%%%%%%%%%%%%%%%%%%%%%%%%%%%%%%%%

It follows from the definition that
for any graph $G$, $\olch(G) \ge \ch(G)$.
There are graphs $G$ with $\olch(G) >  \ch(G)$ (see \cite{Zhuonline}).
However, many currently known upper bounds for the
choice numbers of  classes of graphs  remain upper bounds for their on-line choice
numbers. For example, the on-line choice number of planar graphs is
at most $5$ \cite{Schauz}, the on-line choice number of  planar graphs of girth at least $5$
is at most $3$ \cite{Schauz,CZ2011},
the on-line choice number of the line graph $L(G)$ of a
bipartite graph $G$ is  $\Delta(G)$ \cite{Schauz}, and if $G$ has an orientation in
which the number of even eulerian subgraphs differs from the number
of odd eulerian subgraphs and $f(x) = d^+(x)+1$, then $G$ is on-line
$f$-choosable \cite{Schauz4}.

To prove an upper bound for $\olch(G)$, one needs to design an on-line algorithm that produces a proper list colouring,
while to prove an upper bound for $\ch(G)$, it suffices to  prove the existence of a proper list colouring.
For an upper bound $k$ for $\ch(G)$,  a proof of $k$ being an upper bound for $\olch(G)$
usually leads to better
understanding of the original result and problem.
For example,
Schauz \cite{Schauz4}
generalized the Alon-Tarsi upper bounds for choice number to on-line choice number. This work
leads to a pure combinatorial proof of the Alon-Tarsi upper bounds which was originally proved by
using polynomial method. Alon proved that the fractional choice number of a graph always equals its fractional chromatic number.
Gutowski \cite{Gutowski2011} generalized this result  and proved that the on-line fractional choice number
of a graph always equals its fractional chromatic number. Gutowski's proof, which is a simple algorithm,
is conceptually simpler than the original proof which uses probabilistic method.
Another result of Alon says the choice number of any graph $G$ is bounded above by
$C \chi(G) \log_2 |V(G)|$ for some constant $C$. Zhu \cite{Zhuonline} showed that upper bound is also an upper bound for the on-line choice number.
The original proof uses probabilistic argument, and a simple algorithm is given in \cite{Zhuonline} to produce the required list colouring.

A graph $G$ is called {\em chromatic-choosable}   if
$\chi(G) =  \ch(G)$. The problem as which graphs are chromatic-choosable
has been extensively studied. A few well-known classes of graphs are conjectured to be chromatic-choosable. These include
line graphs  (conjectured
independently by Vizing, by Gupta, by Albertson and Collins, and by
Bollob\'{a}s and Harris, see \cite{HC1992} and \cite{JensenToft}), claw-free graphs (conjectured by Gravier and Maffray \cite{GM1997}),
 square of graphs (conjectured by Kostochka and Woodall \cite{KW2001}), graphs $G$ with  $|V(G)| \le 2\chi(G)+1$ (conjectured by Ohba \cite{OH2002}).
 All these conjectures remain open, except that the Ohba Conjecture has recently been confirmed by
  Noel. Reed and Wu \cite{NRW12}:

\begin{theorem}
  \label{thmNRW} If $|V(G)| \le 2\chi(G)+1$, then ${\rm ch}(G)=\chi(G)$.
\end{theorem}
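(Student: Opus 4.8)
The plan is to establish the nontrivial inequality $\ch(G)\le\chi(G)$, since $\ch(G)\ge\chi(G)$ holds for every graph. First I would reduce to complete multipartite graphs. Fix an optimal proper colouring of $G$ with colour classes $A_1,\dots,A_k$, where $k=\chi(G)$, and let $K$ be the complete $k$-partite graph on the same vertex set with parts $A_1,\dots,A_k$. Then $G\subseteq K$, so every $L$-colouring of $K$ restricts to one of $G$ and hence $\ch(G)\le\ch(K)$; moreover $\chi(K)=k=\chi(G)$ and $|V(K)|=|V(G)|\le 2k+1$. Thus it suffices to show that $G=K_{n_1,\dots,n_k}$ with $\sum_{i=1}^k n_i\le 2k+1$ is $k$-choosable. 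By padding parts with extra vertices (which only makes the graph harder to colour while preserving $\chi=k$) I may assume $\sum_i n_i=2k+1$, so the profile of part sizes is tightly constrained: the total excess $\sum_i(n_i-1)=k+1$ forces all but a few parts to be small, and on average a part has size close to $2$.

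Next I would recast the colouring problem. In a complete multipartite graph two vertices are adjacent exactly when they lie in different parts, so in any proper colouring each colour is confined to a single part. Hence, for a list assignment $L$ with $|L(v)|=k$, an $L$-colouring is the same as a choice of pairwise disjoint colour sets $C_1,\dots,C_k$ with $L(v)\cap C_i\neq\varnothing$ for every $v\in A_i$. The goal becomes: find such disjoint \emph{systems of representative colours} for the parts, spending the budget of colours economically.

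I would then argue by induction on $|V(G)|$, proving the $k$-choosability of all complete $k$-partite graphs with at most $2k+1$ vertices simultaneously; it may be necessary to strengthen the statement to an $f$-choosability assertion carrying a weight condition on the $f(v)$, so that the hypothesis is strong enough to survive the reductions. The base case is the clique $K_k$, where $\ch(K_k)=k$ is immediate. The main reduction handles a part $A_i$ of size $n_i\ge 2$ whose lists share a common colour $c\in\bigcap_{v\in A_i}L(v)$: colour all of $A_i$ with $c$, delete $A_i$, and delete $c$ from every remaining list. The residual graph is complete $(k-1)$-partite on $|V(G)|-n_i\le 2k-1=2(k-1)+1$ vertices with lists of size $\ge k-1$, so the induction hypothesis applies. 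Singleton parts require separate handling (the size-$\ge 2$ hypothesis is exactly what the inequality needs), and one further tries to exploit any colour occurring across many parts.

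The main obstacle is the residual case in which no such reduction applies: every part of size $\ge 2$ has lists with empty common intersection, and the colours interlock across parts in a genuinely two-dimensional pattern. Here a naive greedy choice of a representative for one part can block every part sharing colours with it, and the slack $|V(G)|\le 2k+1$ must be spent very carefully. I expect this to be the crux: one needs a global selection argument — a Hall-type condition on an auxiliary incidence structure between parts and colours, or a discharging/potential argument — that converts the average-size-$2$ constraint into the existence of disjoint representative sets. Making this selection argument both sufficient and compatible with the inductive (weighted) hypothesis is, I anticipate, the heart of the proof and the step most likely to demand a genuinely new idea rather than routine calculation.
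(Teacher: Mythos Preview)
The paper does not prove Theorem~\ref{thmNRW}; it is quoted as the Noel--Reed--Wu theorem \cite{NRW12} confirming Ohba's conjecture, and no proof is supplied in this paper. So there is no ``paper's own proof'' to compare against. What the paper does prove off-line is only the special case of independence number at most $3$ (Theorem~\ref{thm:on-line-independence-3}), and that proof proceeds by a route rather different from your outline: it uses Lemma~\ref{lem:smallNbOfCol} to bound the total number of colours, finds one colour shared by two vertices of a triple, removes those two vertices, and then verifies that the remaining graph satisfies the hypotheses of the technical on-line Lemma~\ref{k3k:induction}. No Hall-type selection is carried out directly; Hall's theorem is used only through Lemma~\ref{lem:smallNbOfCol}.

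As for your proposal itself: it is a reasonable sketch of the standard opening moves (reduction to complete multipartite graphs, induction, the ``common colour in a part'' reduction), but it is not a proof. You explicitly acknowledge that the residual case---where no part of size $\ge 2$ has a common colour and the lists interlock---is the crux and that it ``most likely demands a genuinely new idea.'' That is exactly where the difficulty of Ohba's conjecture lies, and the Noel--Reed--Wu argument for the general case is substantial and does not reduce to a routine Hall/discharging step of the kind you gesture at. So the proposal has a genuine gap at precisely the decisive point: everything up to the residual case is standard, and the residual case is left as an open-ended hope rather than an argument.
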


We call a graph $G$ {\em on-line chromatic-choosable} if  $\chi(G)=\olch(G)$.
It is natural to ask if the classes of graphs mentioned above are also on-line chromatic choosable.
It is proved in \cite{KKLZ2011} that the complete multipartite graph
$G=K_{3, 2\star k}$ with $k$ parts of size $2$ and $1$ part of size $3$ has $\olch(G)=k+2$.
So, in contrast to Theorem \ref{thmNRW},  there are graphs $G$ with $|V(G)| = 2\chi(G)+1$
and yet  $\olch (G)> \chi(G)$.
Nevertheless, experiments and some earlier results
show that a slightly modified version of Ohba's conjecture might be true in the on-line case.
The following conjecture was proposed in \cite{HWZ2010}.

\begin{guess}
\label{g4} If $|V(G)| \le 2\chi(G)$, then  $\chi(G)=\olch(G)$.
\end{guess}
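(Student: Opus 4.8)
The statement is the full on-line Ohba conjecture, which remains open; what follows is the route I would take, and the point at which it stalls in general. The first step is a reduction to complete multipartite graphs. Suppose $\chi(G)=k$ and $|V(G)| \le 2k$. Fix a proper $k$-colouring of $G$ and let $G'$ be the complete $k$-partite graph whose parts are the colour classes. Then $G$ is a spanning subgraph of $G'$, so $\chi(G')=k$ and $|V(G')|=|V(G)| \le 2k$, and every independent set of $G'$ is independent in $G$. Consequently any winning strategy for the Painter on $G'$ is also winning on $G$, giving $\olch(G) \le \olch(G')$. Since $\olch \ge \chi$ always, it therefore suffices to prove that $\olch(K_{n_1,\dots,n_k})=k$ whenever $\sum_{j=1}^{k} n_j \le 2k$; equivalently, that the Painter wins the on-line $(K_{n_1,\dots,n_k},k)$-game under this bound.

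To build the Painter's strategy I would prove a stronger, ``budgeted'' statement by induction: for a complete multipartite graph $H$ on the uncoloured vertices together with a remaining-budget function $g$, the instance $(H,g)$ is on-line $g$-choosable provided an Ohba-type invariant — a suitably weighted version of $\sum_{j}(\text{size of part } j) \le 2\,(\text{budget})$ — holds. One round of the game removes the set $X_i$ coloured by the Painter and decreases by one the budget of every uncoloured vertex in $V_i \setminus X_i$; the induction is then on total remaining budget or on vertex count. The heart of the proof is the Painter's selection rule: upon seeing $V_i$, colour a maximal independent set $X_i$ — at most one vertex from each part met by $V_i$ — choosing the vertices so as to advance the parts closest to exhausting their budget, and then checking that the invariant survives for \emph{every} possible $V_i$. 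Parts that have shrunk to a single uncoloured vertex should be dischargeable cheaply, so that the genuine difficulty is concentrated in the parts of size at least $2$.

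The obstacle that keeps the conjecture open is the on-line commitment: the Painter must fix $X_i$ before the Lister's later sets are revealed, and a locally safe choice can leave several large parts simultaneously contending for the few surviving colours. When every part has size at most $3$ — that is, $\alpha(G) \le 3$ — this contention is bounded: the hypothesis $\sum_j n_j \le 2k$ forces the number of size-$3$ parts to be at most the number of size-$1$ parts, the selection rule has only finitely many configurations to resolve, and the invariant can be maintained by hand. This is precisely the regime the abstract announces as solved, and it is where I would expect the argument above to go through. Pushing it to parts of unbounded size, where the Lister can iteratively drive many large parts into conflict, is the main obstacle; overcoming it would likely require a more global potential function, or a fractional relaxation of the greedy selection, rather than the round-by-round commitment sketched here.
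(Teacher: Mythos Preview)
You are right that Conjecture~\ref{g4} is not proved in the paper; it is stated as open, and the paper establishes only the special case $\alpha(G)\le 3$ (Theorem~\ref{thm:ohba3}) together with the much weaker bound $|V(G)|\le \chi(G)+\sqrt{\chi(G)}$ in the corollary to Theorem~\ref{thm:existence}. Your reduction to complete multipartite graphs is the standard one and matches what the paper does implicitly.

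Where your sketch diverges from the paper's actual machinery for the $\alpha(G)\le 3$ case is worth noting. You propose a single ``weighted Ohba-type'' inequality as the inductive invariant and a greedy rule that colours a \emph{maximal} independent subset of $V_i$, prioritising parts closest to exhausting their budget. The paper's Lemma~\ref{k3k:induction} does neither. Its invariant is a \emph{system} of seven inequalities \eqref{inv:1}--\eqref{inv:3.3}, with the parts split into four classes $\mathcal{A},\mathcal{B},\mathcal{C},\mathcal{S}$ (two of them linearly ordered), and the inequalities mix single-vertex lower bounds, pair sums, and triple sums against quantities involving $|V(G)|$, $k_1,k_2,k_3$ and the position in the ordering. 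The Painter's move is governed by a ten-case priority list, and in most cases the chosen set $I$ is \emph{not} maximal in $U$: often a single vertex is coloured even when $U$ meets many parts, because the goal is to discharge a \emph{saturated} constraint (e.g.\ Cases~3--5) rather than to colour as much as possible. The paper itself remarks that ``the most difficult part of the work is not the proof itself, but to nail down a proper technical statement \ldots\ which can be proved by induction''; a single budget inequality of the kind you suggest does not seem to survive all of the Lister's moves, which is exactly why the elaborate class structure and saturation-driven case analysis are introduced.

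So your high-level plan---reduce to multipartite, carry an invariant through an inductive Painter strategy---is the right shape, but the concrete invariant and the selection rule you describe are both too coarse even for the $\alpha\le 3$ case, and the paper offers no route beyond that.
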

\noindent
%Naturally,

The on-line version of Ohba conjecture is largely open, although some special cases are verified \cite{HWZ2010,KKLZ2011}.
Some of the tools that are crucial to the study  of Ohba's conjecture do not apply to the on-line version.
In all the papers that study Ohba's Conjecture, including the paper by Noel, Reed and Wu \cite{NRW12} which finally
proves this conjecture, an important step is an application of Hall Theorem concerning matching in bipartite graphs.
However, Hall Theorem does not apply to the on-line version.
New techniques are needed for the study of on-line version of Ohba's conjecture.

The main focus of this paper is the on-line version of Ohba's conjecture.
We confirm Conjecture \ref{g4} for graphs with independence number at most $3$.
The proof is by induction. The most difficult part of the work is
not the proof itself, but to nail down a proper technical
statement, i.e., Lemma \ref{k3k:induction}, which can be proved by
induction and implies Conjecture \ref{g4} for graphs with independence number at most $3$.
 Another  consequence of   Lemma \ref{k3k:induction} is that
  $\olch(K_{3\star k}) \le \frac32k$, where $K_{3 \star k}$ is the complete $k$-partite
graph with each part of size $3$. The choice number of these graphs were studied
by Erd\"{o}s, Rubin and Taylor \cite{ERT79}, and Kierstead \cite{Kierstead00}.
It was proved in \cite{ERT79} that   $\ch(K_{3\star k}) \ge \lceil (4k-1)/3 \rceil$, and proved in
  \cite{Kierstead00}  that this lower bound is tight: for any positive integer $k$,
$\ch(K_{3\star k}) = \lceil (4k-1)/3 \rceil$.
By proving a technical result similar to Lemma \ref{k3k:induction},
we give an alternate proof of Kierstead's result.

As mentioned above,
we know that there are graphs $G$ for which $\olch(G) > \ch(G)$.
However, it remains a challenging open problem to determine whether the difference $\olch(G)-\ch(G)$ can be arbitrarily large.
As the exact values of $\ch(K_{3\star k})$ are known (there are not many graphs whose choice numbers are known), the  graphs $K_{3 \star k}$
are natural  candidates for proving a hypothetic separation (by more than a constant) of choice number and on-line choice number.
It would be interesting to determine the exact value of $\olch(K_{3\star k})$ or prove lower bounds for $\olch(K_{3\star k})$
that separates their choice number and on-line choice number by more than a constant.

%%%%%%%%%%%%%%%%%%%%%%%%%%%

\section{The join of $G$ and $K_n$}

\noindent This section proves that for any graph $G$,  by adding enough universal vertices, one can construct a graph that is on-line chromatic-choosable. For two graphs $G$ and $G'$, the {\em join} of $G$ and $G'$, denoted by $G  +  G'$ is the graph obtained from the disjoint union of
$G$ and $G'$ by adding all the possible edges between $V(G)$ and $V(G')$.

\begin{theorem}\label{thm:existence}
For every graph $G$ there exists a positive integer $n$ such that $\chi(G  +  K_n)=\choice(G +  K_n)$.
\end{theorem}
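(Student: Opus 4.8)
The plan is to prove the equivalent upper bound $\olch(G+K_n)\le\chi(G)+n$ for a suitably large $n$. Since every vertex of $K_n$ is adjacent to all other vertices of $G+K_n$, we have $\chi(G+K_n)=\chi(G)+n$, and the general inequality $\olch\ge\chi$ supplies the matching lower bound for free; so only the upper bound is at stake. The observation that drives everything is that the \emph{excess} $|V(G+K_n)|-\chi(G+K_n)=|V(G)|-\chi(G)=:e$ does \emph{not} depend on $n$, whereas $\chi(G+K_n)=\chi(G)+n$ tends to infinity. Thus for large $n$ the graph $G+K_n$ has only a bounded number of vertices beyond its chromatic number, which is exactly the regime in which on-line Ohba-type phenomena force chromatic-choosability.

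Concretely, I would fix a proper colouring of $G$ with classes $I_1,\dots,I_{\chi(G)}$; since $\sum_j(|I_j|-1)=e$, at most $e$ of these classes are non-trivial. In the on-line $(G+K_n,\chi(G)+n)$-game the Painter should treat the $n$ universal vertices as an abundant reservoir of private colours and colour $G$ essentially along the fixed classes, using the large budget to repair the splittings that the Lister forces. The heart of the argument is a \emph{global} Painter strategy, controlled by a potential/weight function on the uncoloured vertices that the Painter can keep below the losing threshold until every vertex is coloured; the bounded excess $e$ is what keeps this potential manageable while $\chi(G)+n$ grows.

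The main obstacle is that a naive \emph{simulation} strategy does not suffice, and seeing why pinpoints what the proof must exploit. If the Painter merely runs an optimal on-line strategy for $G$ (budget $\olch(G)$) and colours each universal vertex in a round otherwise spent on $G$, then colouring a universal vertex costs one wasted $G$-round, and the Lister can keep one fixed universal vertex present throughout all of the (up to $|V(G)|\cdot\olch(G)$) rounds needed to finish $G$. This only yields $\olch(G+K_n)\le\olch(G)+n$, which is too weak precisely when $\olch(G)>\chi(G)$. Overcoming this requires genuinely using the bounded excess rather than the structure of $G$ in isolation: the universal vertices must be disposed of \emph{during} the colouring of $G$ without sacrificing progress, which a fixed-colouring-plus-correction scheme can afford only because $e$ is bounded while the budget $\chi(G)+n$ is large.

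Finally, the choice of $n$ is made explicit by comparing $e$ with $\chi(G+K_n)$. Taking $n$ so large that $|V(G)|-\chi(G)\le\sqrt{\chi(G)+n}$, for instance $n\ge(|V(G)|-\chi(G))^2-\chi(G)$, gives $|V(G+K_n)|\le\chi(G+K_n)+\sqrt{\chi(G+K_n)}$. The small-excess on-line Ohba bound (the claim from the abstract that $|V(H)|\le\chi(H)+\sqrt{\chi(H)}$ implies $\olch(H)=\chi(H)$, of which the present strategy for $G+K_n$ is a clean special case) then forces $\olch(G+K_n)=\chi(G+K_n)=\chi(G)+n$, which is the desired conclusion.
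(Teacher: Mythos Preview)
Your reduction is correct and is indeed the crux: the excess $|V(G+K_n)|-\chi(G+K_n)=|V(G)|-\chi(G)$ is independent of $n$, so for large $n$ the graph $G+K_n$ lands in the small-excess regime $|V(H)|\le\chi(H)+\sqrt{\chi(H)}$. If that Corollary were available as an independent fact, your argument would be a one-line proof of the theorem.

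The gap is that in this paper the implication runs the other way. The Corollary is not proved separately; it is extracted \emph{from} the proof of Theorem~\ref{thm:existence}. The paper first gives, for $H_0=G+K_n$ with $n=|V(G)|^2$, an explicit Painter algorithm (colour a whole part contained in $U_i$ if possible, otherwise a maximal independent set of $U_i$), and then shows via a concrete potential---the \emph{deficit} $d_{H_i}(v)+1-f_i(v)$ summed over each part---that every part of size $\ge 2$ loses total deficit in each round. Since the initial deficit of any part is at most $|V(G)|^2=n$, after $n$ rounds only a clique remains, and a separate count handles the surviving singletons. Only after this argument does the paper remark that the same computation yields the $\sqrt{\chi}$ Corollary. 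So invoking the Corollary to prove the theorem is circular here.

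Your middle paragraphs gesture at exactly the right mechanism (a global potential on uncoloured vertices, universal vertices as a reservoir), but the proposal never specifies the strategy or the potential, and the final paragraph outsources that work to a statement whose only proof in the paper \emph{is} that strategy. To turn this into a proof you must actually write down the Painter's rule and carry out the deficit bookkeeping; once you do, you will have reproduced the paper's argument, and the Corollary will fall out as the quantitative form rather than as an input.
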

\begin{proof}
Without loss of generality, we may assume that $G$ is a complete $\chi(G)$-partite graph.
Let us start with an easy observation (see \cite{Schauz}):   Assume $H$ is a graph and $f:V(H)\to \NN$ is a function. If $f(v)\geq d_H(v)+1$ for a vertex $v\in V(H)$, then
\begin{center}
$H$ is on-line $f$-choosable if and only if $H-v$ is on-line $f$-choosable.
\end{center}

For a given graph $G$, we put $H_0=G +  K_n$ with $n=\norm{V(G)}^2$ and $f(v)=\chi(H_0)=\chi(G)+n$ for all $v\in V(H_0)$. Let $V_1,\ldots, V_{\chi(H_0)}$ be a partition of $V(H_0)$ into independent sets. We are going to present a winning strategy for the Painter  in the on-line $(H_0,f)$-list colouring game.

We denote by $H_i$ a subgraph of all uncoloured vertices of $H_0$ after $i$ steps, and $f_i(v)$ be the number
of remaining permissible colours for $v$ after $i$ step.
Before playing the $(i+1)$-th step, we delete from $H_i$, one by one,
all the vertices $v$ with $f_i(v)\geq d_{H_i}(v)+1$ (by using the observation above).
The resulting graph is still denoted by
$H_i$. Now, by a \emph{part} of $H_i$ we mean a non-empty set of the form $V_j\cap H_i$ for $1\leq j\leq \chi(H_0)$.
Assume at the $(i+1)$th step, the Lister chooses a subset $U_i$. the Painter   finds an independent set $I$ contained in $U_i$ according to the
following algorithm.

\begin{algorithm}
\caption{Strategy for the Painter in the $(i+1)$-th step}
\uIf{\textup{there is a part $V$ of $H_i$ with $\norm{V}\geq2$ and $V\subseteq U_i$}\label{alg:large-part-if}}
{
\Pick $I=V$\;\label{alg:large-part-taken}
}
\uElseIf{\textup{there is a part $V$ of $H_i$ with $\norm{V}=1$ and $V\subseteq U_i$}}
{
\Pick $I=V$\;\label{alg:small-part-taken}
}
\uElse
{
\Pick $I$ to be any maximal independent set in $U_i$\;\label{alg:weak-move}
}
\end{algorithm}

For $v\in V(G\cap H_i)$, 
define the \emph{deficit} of $v$ as $d_{H_i}(v)+1-f_i(v)$,
which is the number of additional permissible colours needed so that $v$ can be removed from the graph
(by the observation we started with). Since vertices $v$ with $f_i(v) \ge d_{H_i}(v)+1$
are removed, we know that the deficit of each vertex $v$ is positive.
The deficit of a part $V$ of $H_i$ is the sum of deficits of its vertices
\[
	\sum_{v\in V}(d_{H_i}(v)+1-f_i(v)).
\]

We will show that after every step of the game the deficit of each part of size at least $2$ decreases.
Let $V$ be a part of $H_i$ and $\norm{V}\geq 2$.

If line \ref{alg:large-part-taken} is executed, then
either part $V$ is picked and it disappears in $H_{i+1}$, or $d_{H_{i+1}}(v)\leq d_{H_i}(v)-2$ and $f_{i+1}(v)\geq f_i(v)-1$ for
all $v \in V$. Hence the deficit of each vertex of $V$ decreases.

If line \ref{alg:small-part-taken} is executed,
then $d_{H_{i+1}}(v)=d_{H_i}(v)-1$, $f_{i+1}(v)\geq f_i(v)-1$ for all $v\in V$, and there exists $v\in V$
with $f_{i+1}(v)=f_i(v)$ as $V$ is not contained in $U_{i+1}$.
So the total deficit of $V$ decreases.

Assume line \ref{alg:weak-move} is executed. If $I=V\cap U_{i+1}$,
then $d_{H_{i+1}}(v)=d_{H_i}(v)$, $f_{i+1}(v)=f_i(v)$ for all $v\in V- U_{i+1}$ so
the sum decreases as the deficit of erased vertices is positive.
Otherwise,
$d_{H_{i+1}}(v)\leq d_{H_i}(v)-1$ and $f_{i+1}(v) \ge f_i(v)-1$
for all $v \in V$ and there exists $v \in V$ with $f_{i+1}(v)=f_i(v)$.
So the deficit of $V$ decreases.

As each $v\in V(H_0)$ has deficit bounded by $\norm{V(G)}$,   each part has
initial deficit bounded by $n=\norm{V(G)}^2$.  Since after each step the deficit of each part
of size at least $2$ decreases and vertices with non-positive deficit are deleted,
after $n$ rounds the remaining graph, namely $H_n$, forms a clique.

The vertices in $H_n$ may come from $G$ or $K_n$ and there are at most $\chi(G)$ vertices coming
from $G$, at most one for each part of $G$.  If $U_i\cap K_n \neq \emptyset$ then the number of
parts in $H_{i+1}$ decreases by $1$ comparing to the number of parts in $H_i$
(as line \ref{alg:large-part-taken} or \ref{alg:small-part-taken} is executed).
Therefore
\begin{align*}
f_n(v)&\geq \text{the number of parts in $H_n$}= d_{H_n}(v)+1&&\text{for all $v\in H_n \cap K_n$}
\end{align*}
For vertices $v \in H_n \cap G$, as each step decreases the number of permissible colours by at most $1$, we have $f_n(v) \ge f_0(v) - n = \chi(G)$.
By applying the observation repeatedly, these inequalities certify that all vertices of $H_q$ are removed and $H_q$
is empty, which finishes the proof.
\end{proof}

The argument presented gives also an Ohba-like statement with much more restricted constraint on the size and the chromatic number of
a graph.

\begin{corollary}
If $\norm{V(G)}\leq \chi(G)+\sqrt{\chi(G)}$, then $\chi(G)=\olch(G)$.
\end{corollary}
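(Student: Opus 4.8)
The plan is to run the Painter's strategy from the proof of Theorem~\ref{thm:existence} directly on $G$ with the constant budget $f\equiv\chi$, where $\chi=\chi(G)$, and to show the game collapses to a clique quickly enough that the surviving vertices still have enough colours left. As there, we may assume $G$ is complete $\chi$-partite; put $r=\norm{V(G)}-\chi$, so the hypothesis reads $r\le\sqrt\chi$, and assume $r\ge1$ (otherwise $G$ is a clique). A vertex $v$ in a part $P$ has initial deficit $d_G(v)+1-\chi=r+1-\norm P$, so $\sum_{\norm P\ge2}(\norm P-1)=r$; thus there are at least $\chi-r$ singleton parts, the number $s$ of parts of size at least $2$ obeys $s\le r$, and the clean-up step of the algorithm deletes any part of size $r+1$ (there is at most one, and removing it leaves a complete graph). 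Hence I may assume every part has size between $1$ and $r$.

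First I would invoke, verbatim, the deficit-decrease analysis from the proof of Theorem~\ref{thm:existence}: after each round the deficit of every part of size at least $2$ drops by at least one. Since such a part $P$ begins with deficit $\norm P(r+1-\norm P)\le\lfloor(r+1)^2/4\rfloor$ and keeps positive deficit while its size is at least $2$, it shrinks to size at most $1$ within $\lfloor(r+1)^2/4\rfloor$ rounds. Therefore there is a round $m\le\lfloor(r+1)^2/4\rfloor$ after which the remaining graph $H_m$ is a clique, on $c$ vertices say.

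Next I would reread the two end-of-game estimates of the theorem, now starting from budget $\chi$ rather than $\chi+n$. Whenever a surviving singleton vertex is presented, line~\ref{alg:large-part-taken} or line~\ref{alg:small-part-taken} fires and a part disappears, so the number of colours such a vertex loses is at most the number $\chi-c$ of removed parts, whence $f_m(v)\ge\chi-(\chi-c)=c=d_{H_m}(v)+1$. A survivor $v$ from a part of size at least $2$ loses at most one colour per round, so $f_m(v)\ge\chi-m$. I would then empty $H_m$ with the observation used throughout that proof: peel the singleton survivors first (each has $f_m(v)\ge d_{H_m}(v)+1$, and degrees only fall as we peel), leaving a clique on the $j\le s$ survivors of the non-trivial parts, each now needing only $f_m(v)\ge j$.

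The crux---and the only use of $r\le\sqrt\chi$---is the inequality $\chi-m\ge j$. As $j\le s\le r$ and $m\le\lfloor(r+1)^2/4\rfloor$, it suffices to prove
\[
\Big\lfloor\frac{(r+1)^2}{4}\Big\rfloor\le\chi-r,
\]
a routine consequence of $r^2\le\chi$ (for $r\ge3$ one even has $(r+1)^2/4\le r^2-r$, and $r\in\{1,2\}$ are checked directly). I expect this round-count to be the delicate point: the essentially quadratic amount $\tfrac14r^2$ of budget consumed on the way to the clique must be dominated by the linear reservoir $\chi-s$ of singleton vertices, and it is precisely the identity $(\sqrt\chi)^2=\chi$ that singles out $\sqrt\chi$ as the natural threshold. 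The reduction to complete multipartite graphs, the deficit bookkeeping, and the peeling are all imported unchanged from the proof of Theorem~\ref{thm:existence}.
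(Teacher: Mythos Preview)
Your proposal is correct and is exactly the argument the paper has in mind: the corollary is stated without a separate proof, as a direct consequence of rerunning the Painter strategy and deficit bookkeeping of Theorem~\ref{thm:existence} on $G$ itself, with the singleton parts playing the role of the appended $K_n$. The only cosmetic difference is that you take the sharp bound $\lfloor(r+1)^2/4\rfloor$ on a part's initial deficit, which forces the small case split at the end; using the cruder but still valid bound $p(r{+}1{-}p)\le r(r{-}1)$ for integer $p\in[2,r]$ gives $\chi-m\ge\chi-r(r{-}1)\ge r\ge j$ in one line from $\chi\ge r^2$, which is presumably the one-sentence check the authors had in mind.
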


\section{A lemma and main result}
\noindent In the remainder of this paper, we consider complete multipartite graphs of independence
number at most $3$.  For   integers $k_1, k_2, k_3 \ge 0$, we denote by $K_{3\star k_3,2\star k_2,1\star k_1}$
the complete multipartite graph with $k_1$ parts of size $1$, $k_2$ parts of size $2$ and $k_3$ parts of size $3$.
Lemma \ref{k3k:induction} below specifies a sufficient condition for such a graph $G$
to be on-line $f$-choosable. We use it to derive results for on-line Ohba conjecture and on-line choosability of graphs with independence number 3.

For a subset $U$ of $V(G)$, let $\delta_U: V(G) \to \{0,1\}$ be the characteristic function of $U$, i.e., $\delta(x)=1$ if $x \in U$ and
$\delta_U(x)=0$ otherwise. The following observation follows directly from the definition of the on-line $(G,f)$-colouring game (see \cite{Schauz}).

\begin{observation}
%\label{obs:smallNbOfCol}
If $G$ is an edgeless graph and $f(v)\geq 1$ for all $v\in V(G)$, then $G$ is on-line $f$-choosable. If $G$ has at least one edge, then $G$ is on-line $f$-choosable if and only if for every $U\subseteq V(G)$, there is an independent set $I$ of $G$ such that $I\subseteq U$ and $G-I$ is on-line $(f-\delta_U)$-choosable.
\end{observation}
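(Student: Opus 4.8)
The plan is to read both assertions directly off the semantics of the on-line $(G,f)$-game; the content is entirely bookkeeping about how a single round transforms the pair $(G,f)$. For the first assertion, when $G$ is edgeless every subset of $V(G)$ is independent, so the Painter may answer each move $V_i$ by $X_i=V_i$, colouring every presented vertex at once. Each round then strictly shrinks the set of uncoloured vertices, since the Lister's $V_i$ is non-empty, so the game terminates with all vertices coloured; moreover every $v$ is coloured the first time it is presented, so it is never uncoloured after having belonged to any $V_j$, and as $f(v)\ge1$ the Lister can never meet its winning condition. Hence $G$ is on-line $f$-choosable, and this edgeless case also serves as the base case at which the recursion of the second assertion bottoms out.

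For the second assertion I would prove the two implications by matching strategies across the first round. For the backward direction, assume that for every $U$ there is an independent set $I_U\subseteq U$ with $G-I_U$ on-line $(f-\delta_U)$-choosable. The Painter answers the Lister's opening move $V_1=U$ by $X_1=I_U$ and thereafter plays a winning strategy for the $(G-I_U,\,f-\delta_U)$-game. The crux is that the residual game \emph{is} this game: the uncoloured vertices become $V(G)\setminus I_U=V(G-I_U)$, the independent sets of $G$ contained in this set are exactly the independent sets of $G-I_U$, and each surviving vertex $v$ has already appeared in $\delta_U(v)$ of the presented sets, so it is lost precisely upon appearing $(f-\delta_U)(v)$ further times while uncoloured. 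The composed strategy therefore wins, so $G$ is on-line $f$-choosable.

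For the forward direction, fix a winning strategy $\sigma$ for the Painter in the $(G,f)$-game and a set $U\subseteq V(G)$. If $U=\emptyset$ take $I=\emptyset$, so that $G-I=G$ is on-line $(f-\delta_U)$-choosable by hypothesis. If $U\ne\emptyset$, let the Lister open with $V_1=U$ and let $I=X_1\subseteq U$ be the independent set $\sigma$ returns; by the same identification as above, the restriction of $\sigma$ to histories beginning with $(V_1=U,X_1=I)$ is a strategy for the $(G-I,\,f-\delta_U)$-game, and it is winning because $\sigma$ wins every continuation. Hence $G-I$ is on-line $(f-\delta_U)$-choosable, as required.

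The one genuinely delicate step, which I would write out in full, is this identification of the post-first-round game with the $(G-I,f-\delta_U)$-game: one must check simultaneously that the independent sets of $G$ lying in $V(G)\setminus I$ coincide with those of $G-I$, and that the remaining permissible-colour budget of each surviving vertex is exactly $(f-\delta_U)(v)$. Termination is not an obstacle but should be recorded, since it is what makes the one-round recursion well founded: summing $f(v)$ minus the number of presented sets containing $v$ over all currently uncoloured $v$ gives a nonnegative quantity that drops by at least one each round (the Lister's set is non-empty and every uncoloured vertex retains a positive budget), so every play of the game is finite.
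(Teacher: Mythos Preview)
Your proof is correct. The paper itself supplies no proof of this observation; it simply states that it ``follows directly from the definition of the on-line $(G,f)$-colouring game'' and refers the reader to Schauz. What you have written is exactly the natural unpacking of that remark: the one-round recursion identifying the residual game after the Lister's first move $V_1=U$ and the Painter's reply $X_1=I$ with the on-line $(G-I,\,f-\delta_U)$-game, together with the trivial greedy argument for the edgeless base case. There is nothing further to compare.
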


\begin{lemma}
\label{k3k:induction}
	Let $G$ be a complete multipartite graph $G$ with each part of size at most $3$.
Let  $\mathcal{A}, \mathcal{B}, \mathcal{C}, \mathcal{S}$ be a partition of the set of
parts of $G$ into classes  such that $\mathcal{A}$ contains only parts of size $1$,
$\mathcal{B}$ contains only parts of size $2$, $\mathcal{C}$ contains only parts of size
$3$ and $\mathcal{S}$ contains parts of size $1$ or $2$. Let $k_1, k_2, k_3, s$ denote the
cardinalities of classes $\mathcal{A}, \mathcal{B}, \mathcal{C}, \mathcal{S}$, respectively. Suppose
that classes $\mathcal{A}$ and $\mathcal{S}$ are ordered i.e. $\mathcal{A}= (A_1, \ldots, A_{k_1})$
and $\mathcal{S}=(S_1, \ldots, S_s)$. For $1 \le i \le s$, let $v_S(i)= \sum_{1\leq j< i} \norm{S_i}+1$.
Assume $f:V(G)\to \NN$ is a function for
which the following conditions hold
\begin{alignat}{2}
f(v)&\geq k_3+k_2+i, &&\quad\text{for all $1\leq i \leq k_1$ and $v\in A_i$}\tag{1}\label{inv:1}\\
f(v)&\geq 2k_3+k_2+k_1+v_S(i), &&\quad\text{for all $1\leq i \leq s$ and $v\in S_i$}\tag{1'}\label{inv:1'}\\
f(v)&\geq k_3+k_2, &&\quad\text{for all $v\in B\in\mathcal{B}$}\tag{2.1}\label{inv:2.1}\\
\sum_{v\in B}f(v)&\geq \norm{V(G)}, &&\quad\text{for all $B\in\mathcal{B}$}\tag{2.2}\label{inv:2.2}\\
f(v)&\geq k_3+k_2, &&\quad\text{for all $v\in C\in\mathcal{C}$}\tag{3.1}\label{inv:3.1}\\
f(u)+f(v)&\geq \norm{V(G)}-1, &&\quad\text{for all $u,v\in C\in\mathcal{C}$, $u\neq v$}\tag{3.2}\label{inv:3.2}\\
\sum_{v\in C}f(v)&\geq \norm{V(G)}-1+k_3+k_2+k_1, &&\quad\text{for all $C\in\mathcal{C}$}.\tag{3.3}\label{inv:3.3}
\end{alignat}
Then $G$ is on-line $f$-choosable.
\end{lemma}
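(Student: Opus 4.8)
The plan is to prove the lemma by induction, using the Observation as the reduction step. Since $G$ is complete multipartite, every independent set lies inside a single part, so a move of the Painter amounts to selecting one part $P$ and colouring a nonempty subset $I\subseteq P\cap U$. Given a Lister move $U\subseteq V(G)$, I would produce such an $I$ for which the reduced instance $(G-I,\,f-\delta_U)$ again satisfies hypotheses \eqref{inv:1}--\eqref{inv:3.3}, after redistributing the surviving parts among the four classes $\mathcal{A},\mathcal{B},\mathcal{C},\mathcal{S}$ and re-indexing the two ordered classes. Because I will always colour a nonempty $I$, we have $\norm{V(G-I)}<\norm{V(G)}$, so the induction runs on $\norm{V(G)}$. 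The base case is $G$ edgeless, i.e.\ $G$ has at most one part; a direct check shows the hypotheses force $f\geq 1$ there (for instance a lone part in $\mathcal{S}$ has $f\geq 2k_3+k_2+k_1+v_S(1)=1$), so the first clause of the Observation closes it.

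For the inductive step I would fix a priority rule for the Painter modelled on the greedy ``large parts first'' rule of Theorem~\ref{thm:existence}. Concretely: (i) if $U$ contains an entire part of $\mathcal{C}$, let $I$ be that part, so $k_3$ and $\norm{V(G)}$ drop by $1$ and $3$; (ii) otherwise, if $U$ contains an entire part of $\mathcal{B}$ or an entire size-$2$ part of $\mathcal{S}$, colour it; (iii) otherwise $U$ meets every part in a proper subset, and I colour a single vertex of the part of highest priority meeting $U$, the priority being dictated by the orderings on $\mathcal{A}$ and $\mathcal{S}$. In case (iii) a \emph{touched} part must be demoted into the next class down: a size-$3$ part that loses a vertex becomes a size-$2$ part which I route into $\mathcal{S}$, and a size-$2$ part that loses a vertex becomes a singleton which I also route into $\mathcal{S}$, in each case with the position index $v_S$ chosen to respect the cumulative ordering.

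After each move I must re-establish all seven inequalities for the reduced instance, in which $\norm{V(G)}$ has dropped by $\norm{I}$ and $f$ has dropped by exactly $1$ on $U$. The ``local'' bounds \eqref{inv:1}, \eqref{inv:2.1}, \eqref{inv:3.1}, phrased through the class counts $k_1,k_2,k_3$, are robust, since colouring a whole part decreases the relevant count by one and thereby compensates the lost colour, while the orderings let the indices in \eqref{inv:1} shift down cleanly when a part is removed. The delicate inequalities are the coupled ones, \eqref{inv:2.2}, \eqref{inv:3.2} and \eqref{inv:3.3}, which tie the total list size of a part to the global quantity $\norm{V(G)}$; here \eqref{inv:3.2}, namely $f(u)+f(v)\geq \norm{V(G)}-1$, is precisely the reserve guaranteeing that a size-$3$ part, after one vertex is coloured and $\norm{V(G)}$ falls by one, still has two vertices whose combined list meets the new bound, playing the role that Hall's theorem plays in the off-line proofs. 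The hard part, and the reason the statement carries two orderings and the cumulative function $v_S$, is the simultaneous bookkeeping in case (iii): one must check that a residue demoted into $\mathcal{S}$ lands in a slot where the strong requirement \eqref{inv:1'} still holds (this is where the $2k_3+k_2+k_1$ surplus in \eqref{inv:1'} is consumed), while re-indexing $\mathcal{S}$ and $\mathcal{A}$ so that \eqref{inv:1} and \eqref{inv:1'} survive for every other part and the coupled bounds are not violated by the joint drop in $\norm{V(G)}$ and in the $f$-values on $U$. Pinning down a single priority rule and residue-assignment that makes all seven inequalities hold at once, for \emph{every} possible $U$, is the heart of the argument.
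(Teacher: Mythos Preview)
Your inductive framework is exactly the paper's, and you correctly identify that the whole proof boils down to a priority rule plus a routing rule for residues. But the concrete rule you sketch does not work, and the paper's actual rule differs from yours in two essential ways.

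First, your case (iii) colours only a single vertex, even when some $C\in\mathcal{C}$ has $|U\cap C|=2$. If the vertex you colour lies in $\mathcal{S}$ (so that $k_1'+k_2'+k_3'=k_1+k_2+k_3$), then the right side of \eqref{inv:3.3} drops by only $1$ while the left side drops by $2$ on that $C$, and the invariant fails. The paper therefore never reaches the $\mathcal{S}$- or singleton-cases while a $C$-part still has two marked vertices: its Cases~3 and~7 colour \emph{both} vertices of $U\cap C$ and send the surviving singleton to $\mathcal{A}$, not $\mathcal{S}$.

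Second, routing every demoted residue into $\mathcal{S}$ cannot work, because \eqref{inv:1'} demands $f(v)\geq 2k_3+k_2+k_1+v_S(i)$, whereas a surviving vertex of a $C$-part is only guaranteed $f(v)\geq k_3+k_2$ by \eqref{inv:3.1}. The paper routes to $\mathcal{S}$ \emph{only} when the relevant inequality \eqref{inv:2.1} or \eqref{inv:3.1} is saturated on the coloured vertex (its Cases~4 and~5); saturation combined with \eqref{inv:2.2} or \eqref{inv:3.2} then forces the survivors' $f$-values up to the threshold \eqref{inv:1'}. When nothing is saturated, residues go instead to $\mathcal{B}$ (Case~9) or to the \emph{front} of $\mathcal{A}$ (Cases~7 and~10), and when something is saturated the singleton may go to the \emph{end} of $\mathcal{A}$ (Case~3). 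This saturation-based branching, with four different destinations, is precisely the ``pinning down'' you defer; the paper's proof needs ten ordered cases to carry it out.
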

\begin{proof}
The proof goes by induction on $\norm{V(G)}$. If $G$ is edgeless, i.e., $k_1+k_2+k_3+s=1$, then $G$ is
on-line $f$-choosable as $f(v)\geq 1$ for all $v\in V(G)$. Assume now that $G$ has at least two parts
and that the statement is verified for all smaller graphs.

Given $U\subseteq V(G)$, we shall find an independent set $I$ of $G$ such that $I\subseteq U$ and $G-I$
is on-line $(f-\delta_U)$-choosable. Let $G'=G-I$ and $f'=f-\delta_U$.
Note that $f'(v)\geq f(v)-1$ for all $v\in V(G)$. Clearly, $G'$ is also a complete
multipartite graph with each part of size at most $3$. We are going to show that $G'$
with $f'$, an appropriate partition $\mathcal{A'}, \mathcal{B'}, \mathcal{C'}, \mathcal{S'}$
and orderings of $\mathcal{A'}$ and $\mathcal{S'}$  fulfill the conditions of
Lemma \ref{k3k:induction}. Hence, by induction hypothesis $G'$ is on-line $f'$-choosable.

The strategy of choosing an independent set $I$ is given by the case distinction.
Note that we consider the setting of Case $i$ only when the conditions for all $i-1$
previous cases do not hold. When we verify the inequalities from the statement of
Lemma \ref{k3k:induction} for $G'$ and $f'$ we usually compare the total decrease/increase
of left and right hand sides with the analogous inequalities that hold for $G$ and $f$.
The notation for the parts of $G'$ and its sizes is analogous as for $G$, e.g. $A_i'$, $S_i'$,
$k'_1$, $s'$ and so on. Partition $\mathcal{A'}, \mathcal{B'}, \mathcal{C'}, \mathcal{S'}$
and orders on the classes $\mathcal{A'}$ and $\mathcal{S'}$ are usually inherited.
In the case distinction below we comment the partitions only  if the order or partition changes in the considered step.

\smallskip

\case{1}{$C\subseteq U$ for some $C\in\mathcal{C}$.}

\smallskip

Put $I=C$. Then $k_3'=k_3-1$ and all other parameters remain the same. Note that $\norm{V(G')}=\norm{V(G)}-3$. Now, it is immediate that $G'$ with inherited partition and $f'$ satisfy the conditions of Lemma \ref{k3k:induction}.

\smallskip

\case{2}{$B\subseteq U$ for some $B\in\mathcal{B}$.}

\smallskip

Put $I=B$. Then $k_2'=k_2-1$ and all other parameters remain the same. Note that $\norm{V(G')}=\norm{V(G)}-2$. Again, it is immediate that $G'$ with inherited partition and $f'$ satisfy the conditions of Lemma \ref{k3k:induction}. Note that because Case 1 does not apply, for inequality (3.3), the left-hand side decreases by at most $2$.

\smallskip

In all remaining cases, as conditions for  cases 1 and 2 do not hold, we have
\begin{enumeratei-continued}
\item $U$ covers at most one vertex in each $B\in \mathcal{B}$ (we are not in Case 2). This implies that inequalities \eqref{inv:2.2} for any $G'$ will trivially hold provided $\norm{V(G')}\leq \norm{V(G)}-1$.
\item $U$ covers at most two vertices in each $C\in\mathcal{C}$ (we are not in Case 1).\label{enu:3.3}
\end{enumeratei-continued}

\smallskip

\case{3}{There is $C\in\mathcal{C}$ with $U\cap C=\set{u,v}$ and \eqref{inv:3.1} is saturated for $v$ or \eqref{inv:3.2} is saturated for $u$ and $v$.}

\smallskip

Let $C=\set{u,v,w}$. Put $I=\set{u,v}$. Then $k_3'=k_3-1$, $k_1'=k_1+1$ and all other parameters remain unchanged.
Indeed, we colour two vertices of $C$ and the remaining vertex forms $A'_{k_1'}=\set{w}$, a new part of size $1$,
which is appended to the ordering of $\mathcal{A'}$. Note that $\norm{V(G')}=\norm{V(G)}-2$.

Now, we need to check that all the inequalities of Lemma \ref{k3k:induction} hold for $G'$ and $f'$.
Inequality \eqref{inv:1} holds for $A'_i$ with $1\leq i < k_1'$ as the right hand side decreases by $1$
and the left hand side decreases at most by $1$. Inequality \eqref{inv:1} holds for $A'_{k_1'}=\set{w}$
either because  \eqref{inv:3.2} is saturated for $u,v$ in $G$ and hence
\[
f'(w)=f(w)\geq \norm{V(G)}-1+k_3+k_2+k_1 - (\norm{V(G)}-1)= k_3'+k_2'+k_1',
\]
or because \eqref{inv:3.1} is saturated for $v$ in $G$ and hence
\[
f'(w)=f(w)\geq \norm{V(G)}-1-k_3-k_2\geq 2k_3+k_2+k_1-1= 2(k_3'+1)+k_2'+(k_1'-1)-1.
\]
The inequality \eqref{inv:3.3} for $C\in\mathcal{C}$ holds as the right hand side decreased by $2$ and the left hand side decreased by at most $2$ (see \ref{enu:3.3}). The other inequalities hold trivially.

\smallskip

Note that in all remaining cases
\begin{enumeratei-continued}
\item For each $C\in\mathcal{C}$ either $\norm{U\cap C}\leq 1$, or $\norm{U\cap C}=2$ and \eqref{inv:3.2} is not saturated for $U\cap C$ in $G$ (we are not in Case 3). This implies that inequalities \eqref{inv:3.2} will hold for any $G'$ provided $\norm{V(G')}\leq \norm{V(G)}-1$.
\end{enumeratei-continued}

\smallskip

\case{4}{There is $B\in\mathcal{B}$ with $U\cap B=\set{v}$ and \eqref{inv:2.1} is saturated for $v$.}

\smallskip

Let $B=\set{u,v}$. Put $I=\set{v}$. Then $k_2'=k_2-1$ and $s'=s+1$ and all other parameters remain unchanged. The part  $\set{u}$ form a new part of size $1$ and is appended at the end of the order to the class $\mathcal{S}$ as $S'_{s'}$. Note that $\norm{V(G')}=\norm{V(G)}-1$.

We are going to check the inequalities for $G'$ and $f'$. Inequalities \eqref{inv:1} for $A_j'$
with $1 \leq j \leq k_1'$ and \eqref{inv:1'} for $S_j^{'}$ with $1\leq j\leq s'-1$ hold as the
right hand side decreases by $1$ while the left hand side decreases at most by $1$.
Inequality \eqref{inv:1'} for $S'_{s'}=\set{u}$ holds by \eqref{inv:2.2} for $u,v$ in $G$
and the saturation of \eqref{inv:2.1} for $v$ in $G$
\[
f'(u)=f(u)\geq \norm{V(G)}-k_3-k_2 % = 2k_3+k_2+k_1+v_{S}(s+1)-1
=2k_3'+(k_2'+1)+k_1+(v_{S'}(s')-1).
\]
The inequalities \eqref{inv:2.1}, \eqref{inv:3.1} and \eqref{inv:3.3} for $G'$ with $f'$ hold trivially.

\smallskip

Note that in all remaining cases
\begin{enumeratei-continued}
\item   For all $v\in U\cap \bigcup_{B\in\mathcal{B}} B$ the inequality \eqref{inv:2.1} is not saturated for $v$ in $G$. This means that \eqref{inv:2.1} will hold in any $G'$.
\end{enumeratei-continued}

\smallskip
\case{5}{There is $C\in\mathcal{C}$ with $U\cap C=\set{v}$ and \eqref{inv:3.1} is saturated for $v$.}

\smallskip

Let $C=\set{u,v,w}$ and put $I=\set{v}$. The remaining part $\{u,w\}$ is
 appended at the end of the sequence $\mathcal{S}$. Note that $\norm{V(G')}=\norm{V(G)}-1$.

The inequalities \eqref{inv:1} for $A_j'$ with $1\leq j\leq k_1'$ and \eqref{inv:1'} for $S'_j$ with $1\leq j\leq s-1$ hold as the right hand side decreases by $1$ while the left hand side decreases at most by $1$. Inequalities \eqref{inv:1'} for the vertices of the new part $S'_{s'}=\set{u,w}$ hold
 because   \eqref{inv:3.1} is saturated for $v$ in $G$ and hence for $x \in \{u,w\}$,
\[
f'(x)=f(x)\geq (\norm{V(G)}-1)-k_3-k_2=2 (k_3'+1) + k_2'+ k_1'+ (v_{S'}(s')-1)-1,
\]
The inequalities \eqref{inv:2.1}, \eqref{inv:3.1} for $G'$ are trivial. The inequalities \eqref{inv:3.3} for $G'$ hold as the right hand side decreases by $2$ and the left hand side at most by $2$ (see \ref{enu:3.3}).

\smallskip

Note that in all remaining cases
\begin{enumeratei-continued}
\item For all $v\in U\cap \bigcup_{C\in\mathcal{C}} C$ the inequality \eqref{inv:3.1} is not saturated for $v$ in $G$. This means that \eqref{inv:3.1} will hold in any $G'$.
\end{enumeratei-continued}

\smallskip

\case{6}{There is $1\leq i \leq k_1$ with $A_i\subseteq U$.}

\smallskip

Let $i$ be the least index with $A_i=\set{v}\subseteq U$. Put $I=\set{v}$. Then $k_1'=k_1-1$ and all other parameters remain unchanged. We also renumber the parts of size $1$, namely $A_j'=A_{j+1}$ for $i\leq j \leq k_1'$. Note that $\norm{V(G')}=\norm{V(G)}-1$.

The inequality \eqref{inv:1} for $A_j'$ with $1\leq j <i$ holds as  both sides are the same   in $G'$ as in $G$. The inequality \eqref{inv:1} for $A_j'=A_{j+1}=\set{u}$ with $i\leq j \leq k_1'$ holds as
\[
f'(u)\geq f(u)-1\geq k_3+k_2+(j+1)-1.
\]
The inequalities \eqref{inv:3.3} hold in $G'$ as the right hand side decreased by $2$ and the left hand side decreased by at most $2$ (see \ref{enu:3.3}).

\smallskip
\case{7}{There is $C\in\mathcal{C}$ with $U\cap C=\set{u,v}$.}

\smallskip

Let $C=\set{u,v,w}$. Put $I=\set{u,v}$. Then $k_3'=k_3-1$ and $k_1'=k_1+1$ and all other parameters remain unchanged. There is one new part of size $1$, namely $A_1'=\set{w}$, and all the others are renumbered $A_{j}'=A_{j-1}$ for $2\leq j \leq k_1'$. Note that $\norm{V(G')}=\norm{V(G)}-1$.

The inequality \eqref{inv:1} for $A_1'=\set{w}$ holds by \eqref{inv:3.1} for $w$ in $G$
\[
f'(w)=f(w)\geq k_3+k_2=(k_3'+1)+k'_2.
\]
The inequality \eqref{inv:1} for $A_j'=A_{j-1}=\set{x}$ with $2\leq j\leq k_1'$ holds as $x\not\in U$ (Case 6 does not apply)
\[
f'(x)=f(x)\geq k_3+k_2+(j-1)=k_3'+k'_2+j.
\]
The inequalities \eqref{inv:3.3} hold in $G'$ as the right hand side decreased by $2$ and the left hand side decreased by at most $2$.

\smallskip

Note that in all remaining cases
\begin{enumeratei-continued}
\item $\norm{U\cap C}\leq 1$, for $C\in\mathcal{C}$.  As we always have $\norm{V(G')}\leq \norm{V(G)}-1$ and $k_3'+k_2'+k_1'\leq k_3+k_2+k_1$ the inequalities \eqref{inv:3.3} will hold for any $G'$.
\end{enumeratei-continued}

\smallskip

\case{8}{There is $1\leq i \leq s$ with $S_i \cap U \neq \emptyset $.}

\smallskip

Let $i$ be the least $i$ with $S_i \cap U \neq \emptyset$. Put $I=S_i \cap U$. Then $s'=s-1$ and all other parameters remain unchanged. If $|S_i \cap U|=S_i$,  we update the order of the parts in the sequence $\mathcal{S}$, in the following way, for $i\leq j \leq s'$ we put $S'_j=S_{j+1}$. If $|S_i \cap U|\ne S_i$ the order remains the same. Note that $\norm{V(G')}\le \norm{V(G)}-1$.

The inequalities \eqref{inv:1} for $A_j'$ with $1\leq j \leq k_1'$ and \eqref{inv:1'} for $S_j^{'}$ with $1\leq j <i$ hold as   both sides does not change.
For every vertex from parts $S_{j+1}, \ldots S_{s}$ the right hand side of the inequality \eqref{inv:1'} decreases by at least one, therefore inequalities hold. For the vertices from  $S_i\setminus U$ (this set may be empty) both sides of inequality does not change, therefore inequality holds as before.

\smallskip

Note that in all remaining cases
\begin{enumeratei-continued}
\item  Inequalities \eqref{inv:1} and \eqref{inv:1'}   will hold in any $G'$, provided that the right hand side does not increase.
\end{enumeratei-continued}

\smallskip
\case{9}{There is $C\in\mathcal{C}$ with $C\cap U\neq\emptyset$.}

\smallskip
As Case 7 does not apply,  $|C\cap U|=1$.   We put $I=C\cap U$. Say that $C\setminus U= \set{u,v}$ then we put $\set{u,v}$ into class $\mathcal{B'}$. It is straightforward  that vertices from $\{v,u\}$ satisfy \eqref{inv:2.1}. They also satisfy \eqref{inv:2.2} as
\[
f'(u)+f'(v)=f(u)+f(v)\geq \norm{V(G)}-1=\norm{V(G')}.
\]

\smallskip

\case{10}{There is $B\in\mathcal{B}$ with $B\cap U\neq\emptyset$.}

\smallskip

We put $I=B\cap U$. Say that $B \setminus U= \set{u}$. We put $\set{u}$ to the very beginning of the class $\mathcal{A'}$.
By the observations above, all the inequalities hold, and hence $G'$ is on-line $f'$-choosable (note that for Inequalities \eqref{inv:1} and \eqref{inv:1'},
the right hand side does not increase, as $k'_2$ decreases by $1$ and the index increases by $1$).

It is easy to see that one of the 10 cases above occurs and hence $G$ is on-line $f$-choosable.
\end{proof}

The following theorem is an immediate consequence of Lemma \ref{k3k:induction}.
\begin{theorem}
\label{thm:multi3}
$\olch(K_{3\star k})\leq\frac{3}{2}k$, for any positive integer $k$.
\end{theorem}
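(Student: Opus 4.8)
The plan is to apply Lemma~\ref{k3k:induction} directly to $K_{3\star k}$, so the real work is to exhibit a single function $f$ together with a partition of the parts into the four classes that satisfies all seven numerical conditions. Since $K_{3\star k}$ consists of $k$ parts, each of size exactly $3$, the natural choice is to place every part into the class $\mathcal{C}$, so that $k_3=k$ and $k_1=k_2=s=0$; then $\mathcal{A},\mathcal{B},\mathcal{S}$ are empty and conditions \eqref{inv:1}, \eqref{inv:1'}, \eqref{inv:2.1}, \eqref{inv:2.2} are vacuous. I would then set $f\equiv\lceil\frac32 k\rceil$ (the constant function) and check only the three conditions \eqref{inv:3.1}, \eqref{inv:3.2}, \eqref{inv:3.3} that govern parts of size $3$. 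This reduces the theorem to three elementary inequalities in $k$, using $\norm{V(G)}=3k$.

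The three checks proceed as follows. With $f(v)=\lceil\frac32 k\rceil$ for every vertex, condition \eqref{inv:3.1} asks $\lceil\frac32 k\rceil\geq k_3+k_2=k$, which is clear. Condition \eqref{inv:3.2} asks $2\lceil\frac32 k\rceil\geq \norm{V(G)}-1=3k-1$, i.e. $2\lceil\frac32 k\rceil\geq 3k-1$; since $2\lceil\frac32 k\rceil\geq 3k$ this holds with room to spare. The binding constraint is \eqref{inv:3.3}, which demands $3\lceil\frac32 k\rceil=\sum_{v\in C}f(v)\geq \norm{V(G)}-1+k_3+k_2+k_1=3k-1+k=4k-1$. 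So I must verify $3\lceil\frac32 k\rceil\geq 4k-1$. For even $k=2m$ this reads $9m\geq 8m-1$, and for odd $k$ one has $\lceil\frac32 k\rceil=\frac{3k+1}{2}$, giving $\frac{9k+3}{2}\geq 4k-1$, equivalent to $k+5\geq0$; both hold comfortably.

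Once these three inequalities are confirmed, Lemma~\ref{k3k:induction} yields that $K_{3\star k}$ is on-line $f$-choosable for the constant function $f\equiv\lceil\frac32 k\rceil$, hence $\olch(K_{3\star k})\leq\lceil\frac32 k\rceil$. To match the stated bound $\frac32 k$ exactly I would note that the theorem's $\frac32 k$ should be read as this ceiling when $k$ is odd, or simply observe that for the purposes of the upper bound $\olch(K_{3\star k})\leq\lceil\frac32 k\rceil\leq\frac32 k$ whenever $k$ is even and differs by less than $1$ otherwise.

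I do not expect a genuine obstacle here: the content of the theorem lies entirely in Lemma~\ref{k3k:induction}, and this corollary is a matter of choosing the degenerate instance where all parts have size $3$ and picking a constant $f$. The only point requiring mild care is the rounding in condition \eqref{inv:3.3}, which is precisely the inequality $3\lceil\frac32 k\rceil\geq 4k-1$; this is where the value $\frac32 k$ is forced and why the bound cannot be improved by this method below $\lceil\frac32 k\rceil$.
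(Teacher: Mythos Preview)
Your approach is exactly the one the paper intends: the paper simply states that Theorem~\ref{thm:multi3} is ``an immediate consequence of Lemma~\ref{k3k:induction}'' and gives no further proof, so applying the lemma with $\mathcal{A}=\mathcal{B}=\mathcal{S}=\emptyset$, $\mathcal{C}$ containing all $k$ parts, and a constant $f$ is precisely right.

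There is one small slip. You take $f\equiv\lceil\tfrac32 k\rceil$ and then try to argue that $\lceil\tfrac32 k\rceil\le\tfrac32 k$, which is of course false for odd $k$. The fix is simply to use the floor instead: set $f\equiv\lfloor\tfrac32 k\rfloor$. Then \eqref{inv:3.1} reads $\lfloor\tfrac32 k\rfloor\ge k$; \eqref{inv:3.2} reads $2\lfloor\tfrac32 k\rfloor\ge 3k-1$, which holds with equality for odd $k$; and \eqref{inv:3.3} reads $3\lfloor\tfrac32 k\rfloor\ge 4k-1$, which for odd $k$ becomes $\tfrac{9k-3}{2}\ge 4k-1$, i.e.\ $k\ge 1$. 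All three hold, Lemma~\ref{k3k:induction} applies, and you conclude $\olch(K_{3\star k})\le\lfloor\tfrac32 k\rfloor\le\tfrac32 k$ as stated.
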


In the next theorem we prove that on-line Ohba Conjecture is true for graphs with independence number at most 3.

\begin{theorem}\label{thm:off-line-independence-3}
\label{thm:ohba3}
	If $G$ is a graph with independence number at most 3 and $|V(G)|\leq 2\chi(G)$, then $\chi(G)=\olch(G)$.
\end{theorem}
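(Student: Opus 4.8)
The plan is to deduce the theorem from Lemma~\ref{k3k:induction} applied with a constant weight function, after first replacing $G$ by a complete multipartite graph. Set $k=\chi(G)$. Since $\olch(G)\ge\ch(G)\ge\chi(G)$ for every graph, it suffices to show that $G$ is on-line $k$-choosable, i.e.\ on-line $f$-choosable for $f\equiv k$.

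First I would reduce to the complete multipartite case. Fix an optimal proper colouring of $G$ into $k$ colour classes. Because the independence number of $G$ is at most $3$, each colour class has at most $3$ vertices. Let $G^{\ast}$ be the complete multipartite graph whose parts are precisely these colour classes. Every edge of $G$ joins two different classes and is thus present in $G^{\ast}$, so $G$ is a spanning subgraph of $G^{\ast}$; moreover every independent set of $G^{\ast}$ is independent in $G$. Consequently a winning strategy for the Painter in the on-line $(G^{\ast},f)$-game is also winning in the on-line $(G,f)$-game (the Painter plays the very same independent sets, and the losing condition depends only on $f$ and on which vertices are coloured, not on the edges), giving $\olch(G)\le\olch(G^{\ast})$. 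As $G^{\ast}$ is complete $k$-partite we have $\chi(G^{\ast})=k$, so it remains to prove $\olch(G^{\ast})\le k$.

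Next I would read off the arithmetic consequence of the hypothesis. Let $k_1,k_2,k_3$ be the numbers of parts of $G^{\ast}$ of sizes $1,2,3$; then $k_1+k_2+k_3=k$ and $|V(G^{\ast})|=|V(G)|=k_1+2k_2+3k_3$. The bound $|V(G)|\le 2k$ is therefore equivalent to $k_3\le k_1$---there are at least as many singleton parts as parts of size $3$. Now apply Lemma~\ref{k3k:induction} to $G^{\ast}$ with $f\equiv k$, choosing $\mathcal{S}=\emptyset$ and letting $\mathcal{A},\mathcal{B},\mathcal{C}$ be the parts of sizes $1,2,3$ (in any order). Each hypothesis of the lemma collapses to one of the two facts $k_3\le k_1$ and $|V(G^{\ast})|\le 2k$: condition~\eqref{inv:1} reads $k\ge k_2+k_3+i$ for $i\le k_1$, i.e.\ $k_1\ge i$; \eqref{inv:2.1} and \eqref{inv:3.1} read $k\ge k_2+k_3$; \eqref{inv:3.2} reads $2k\ge|V(G^{\ast})|-1$ and \eqref{inv:3.3} reads $3k\ge|V(G^{\ast})|-1+k$, both immediate from $|V(G^{\ast})|\le 2k$; and \eqref{inv:2.2} reads $2k\ge|V(G^{\ast})|$, which is exactly the Ohba bound. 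Conditions~\eqref{inv:1'} are vacuous since $\mathcal{S}=\emptyset$. Hence $G^{\ast}$ is on-line $k$-choosable, so $\olch(G)\le\olch(G^{\ast})\le k=\chi(G)$, as required.

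The verification is essentially bookkeeping once the right partition is chosen, so the only genuinely delicate point is condition~\eqref{inv:2.2}, which for a size-$2$ part demands $2k\ge|V(G^{\ast})|$---exactly the place where the hypothesis $|V(G)|\le 2\chi(G)$, rather than the off-line Ohba bound $|V(G)|\le 2\chi(G)+1$, is indispensable. This matches the known counterexample $K_{3,2\star k}$ with $|V|=2\chi+1$, for which $k_3=1>0=k_1$ and \eqref{inv:2.2} fails; so I expect the only real care to go into confirming that the choices $\mathcal{S}=\emptyset$ and a constant $f$ do satisfy \eqref{inv:2.2}, and not into the remaining inequalities.
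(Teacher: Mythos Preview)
Your proof is correct and follows essentially the same route as the paper: reduce to the complete multipartite supergraph on the colour classes, then apply Lemma~\ref{k3k:induction} with $\mathcal{S}=\emptyset$ and $f\equiv\chi(G)$, checking each inequality directly. The paper is terser---it simply asserts the WLOG reduction and verifies the inequalities in two lines---while you spell out the reduction and the arithmetic (including the equivalent reformulation $k_3\le k_1$), but there is no substantive difference in method.
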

\begin{proof}[Proof of Theorem \ref{thm:off-line-independence-3}]
Without loss of generality, we can assume that $G$ is a complete multipartite graph with parts of size at most 3.  We are going to verify that $G$ satisfies Lemma \ref{k3k:induction} with $\mathcal{S}= \emptyset$,  $f\equiv \chi(G)$ and arbitrary order on the class $\mathcal{A}$ (when $\mathcal{S}=\emptyset$ the remaining classes of the partition are determined). Let $k_1, k_2, k_3$ denote the sizes  of parts of sizes 1,2,3, respectively.

Inequalities for the single vertices \eqref{inv:1}, \eqref{inv:2.1}, \eqref{inv:3.1} hold as $f(v)=\chi(G)= k_1+k_2+k_3$. Condition on pairs of vertices \eqref{inv:2.2}, \eqref{inv:3.2}
hold since $f(u)+f(v)= 2\chi(G) \geq |V(G)|$ (by the assumption on $G$).
Moreover adding $\chi(G)= k_3+k_2+k_1$ on both sides of the inequality \eqref{inv:3.2} gives \eqref{inv:3.3}.

Now, by Lemma \ref{k3k:induction} $G$ is on-line chromatic-choosable.
\end{proof}

\section{Off-line considerations}
The main obstacle for translating results from off-line to on-line case concerns the application of Hall Theorem, which can not be applied directly to the on-line version. In the following section we present how some previously known results can be derived by the methods we used for Theorem \ref{thm:ohba3} and \ref{thm:multi3}.
The application of Hall Theorem is encapsulated in the following lemma used in many proofs concerning choosability (see e.g.\ \cite{Kierstead00,RS05}).
\begin{lemma}
\label{lem:smallNbOfCol}
A graph $G$ is $k$-choosable if it is $L$-colourable for every $k$-list assignment $L$ such that $|\bigcup_{v\in V} L(v) | < |V|$.
\end{lemma}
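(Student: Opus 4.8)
The plan is to prove the contrapositive: from any $k$-list assignment that is not colourable I will manufacture one that is still not colourable but uses fewer than $n:=|V(G)|$ colours, which directly contradicts the hypothesis. Concretely, I would suppose $G$ is not $k$-choosable, fix a $k$-list assignment $L$ that is not $L$-colourable, and among all such \emph{bad} assignments choose one whose pot $C:=\bigcup_{v}L(v)$ has minimum cardinality. Applying the hypothesis to assignments with $|C|<n$ shows this minimum pot satisfies $|C|\ge n$; the whole game is then to contradict either minimality or outright non-colourability.

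The main engine is a colour-identification (compression) step run on the \emph{co-occurrence graph} $H$ on the colour set $C$, where two colours are adjacent if and only if some vertex has both in its list. Any proper colouring $g$ of $H$ lets me define $L'(v)=\{g(c):c\in L(v)\}$; since each list $L(v)$ is a clique of $H$, the map $g$ is injective on it, so $|L'(v)|=k$ and $L'$ is again a $k$-list assignment, now with pot $g(C)$. Crucially, $L'$ is \emph{harder} than $L$: a proper $L'$-colouring $\psi$ lifts to the $L$-colouring $\phi(v):=g^{-1}(\psi(v))\cap L(v)$, and adjacent vertices then receive colours with distinct $g$-values, hence distinct colours, so a proper $L'$-colouring would give a proper $L$-colouring. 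Thus $L$ bad forces $L'$ bad. Colouring $H$ with $\chi(H)$ colours would yield a bad assignment with pot $\chi(H)$, so minimality of $|C|$ forces $\chi(H)=|C|$, which means $H$ is \emph{complete}: every two colours co-occur in some list.

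It remains to handle the case where all $|C|\ge n$ colours pairwise co-occur, and this is exactly the point where Hall's Theorem is indispensable. I would form the bipartite incidence graph between $V(G)$ and $C$ and look for a system of distinct representatives assigning each vertex a private colour — automatically a proper $L$-colouring, contradicting badness. By Hall's Theorem such a system exists unless some $S\subseteq V(G)$ is deficient, $|\bigcup_{v\in S}L(v)|<|S|$. Reconciling a deficient set with the completeness of $H$ together with the abundance $|C|\ge n$ is the crux: the colours missing from $N(S)$ outnumber $V(G)\setminus S$, while completeness forces every one of them to co-occur with each colour of $N(S)$ inside $V(G)\setminus S$. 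Pushing this tension to a contradiction — most cleanly, I expect, by inducting on $|V(G)|$ and splitting the colouring problem along a minimal deficient set — is the step I anticipate to be hardest; by contrast the compression and the lifting of colourings are routine bookkeeping.
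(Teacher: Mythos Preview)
The paper does not give a proof of this lemma; it simply cites it as a known result (Kierstead~2000, Reed--Sudakov~2005), so there is no paper-side argument to compare against. Your compression idea---properly colouring the co-occurrence graph $H$ and observing that a minimum-pot bad assignment must therefore have $H$ complete---is correct and is essentially the standard opening move for this ``small pot'' lemma. The lifting of an $L'$-colouring back to an $L$-colouring is also handled correctly.

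The genuine gap is exactly where you say it is. You reduce to the situation ``$H$ complete, $|C|\ge n$, and a deficient set $S$ exists'', observe the numerical tension ($|C\setminus N(S)|>|V\setminus S|$, every colour outside $N(S)$ must co-occur inside $V\setminus S$ with every colour of $N(S)$), and then stop, proposing to finish ``by inducting on $|V(G)|$ and splitting the colouring problem along a minimal deficient set''. That suggestion does not work as stated: the only hypothesis you have is about $G$ itself---that $G$ is $L$-colourable whenever the pot has fewer than $|V(G)|$ colours---and this property does not descend to induced subgraphs $G[S]$ or $G[V\setminus S]$, so an induction on $|V(G)|$ has nothing to bite on. Likewise, colouring $G[S]$ from the small pot $N(S)$ and then trying to extend over $V\setminus S$ has no budget for the extension step. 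The pair-counting you sketch yields bounds of the shape $n\le k^2-k+1$, not a contradiction.

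In short, you have set up the right framework but have not crossed the one non-routine threshold: showing that ``$H$ complete and $|C|\ge n$'' actually forces Hall's condition (hence a rainbow $L$-colouring). That implication is precisely the content the paper is outsourcing when it says the lemma ``encapsulates'' Hall's theorem, and it needs an argument you have not supplied.
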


\subsection{Chromatic choosable graphs with independence number 3}

Ohba Conjecture was proved to be true for graphs with independence number at most $3$ already in \cite{SHZL09}. 
We present an alternative proof based on Lemma \ref{k3k:induction}. 
The interesting point here is that we use Lemma \ref{lem:smallNbOfCol} only to prove that it is possible to color two vertices with one colour in such a way that the remaining graph is on-line choosable.

\begin{theorem}\label{thm:on-line-independence-3}
	If $G$ is a graph with independence number at most $3$ and $|V(G)|\leq 2\chi(G)+1$, then $\chi(G)=\ch(G)$.
\end{theorem}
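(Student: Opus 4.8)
The plan is to reduce Theorem \ref{thm:on-line-independence-3} to an application of Lemma \ref{k3k:induction}, exactly as Theorem \ref{thm:ohba3} was obtained, but now handling the extra vertex permitted by $|V(G)| \le 2\chi(G)+1$. As in the off-line proof, I would first assume without loss of generality that $G$ is a complete multipartite graph with all parts of size at most $3$; this is the standard reduction that Ohba-type arguments rely on, since merging nonadjacent vertices into parts can only increase the choice number, and independence number at most $3$ forces every part to have size at most $3$. Write $k_1, k_2, k_3$ for the number of parts of sizes $1, 2, 3$, so that $\chi(G) = k_1+k_2+k_3$ and $|V(G)| = k_1 + 2k_2 + 3k_3 \le 2\chi(G)+1$, which rearranges to $k_3 + k_2 \le k_1 + 1$.

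The key difference from Theorem \ref{thm:ohba3} is that here we are bounding $\ch(G)$, not $\olch(G)$, so we may invoke Lemma \ref{lem:smallNbOfCol}: it suffices to produce an $L$-colouring for every $k$-list assignment $L$ with $|\bigcup_v L(v)| < |V(G)|$, where $k = \chi(G)$. As the surrounding text hints, the intended strategy is to use this hypothesis on the number of colours to colour two vertices lying in distinct parts with a single common colour, and then argue that the remaining graph is on-line $f$-choosable via Lemma \ref{k3k:induction}. Concretely, I would first locate a colour $c$ appearing in the lists of two vertices $u, v$ that belong to different parts (a pigeonhole consequence of having fewer than $|V(G)|$ colours available across the lists of $2\chi(G)+1$ vertices, each list of size $\chi(G)$); I would colour both $u$ and $v$ with $c$. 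After removing $u$ and $v$, the residual instance has at most $2\chi(G)-1$ vertices, and I would set up a function $f$ recording the number of remaining permissible colours on each surviving vertex — roughly $f(w) = |L(w)|$ minus one if $c \in L(w)$.

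The main step is then to check that the residual complete multipartite graph $G'$, together with this $f$ and a suitable partition $\mathcal{A}', \mathcal{B}', \mathcal{C}', \mathcal{S}'$ of its parts, satisfies all seven inequalities \eqref{inv:1}--\eqref{inv:3.3} of Lemma \ref{k3k:induction}; once that is done, $G'$ is on-line $f$-choosable, hence in particular (off-line) $f$-choosable, which completes the colouring of $G$. The bookkeeping is delicate because deleting $u$ and $v$ changes the structure of their former parts: a deleted vertex of a size-$3$ part leaves a size-$2$ part, which I would place into $\mathcal{S}'$, and so on, mirroring the case analysis inside the proof of Lemma \ref{k3k:induction}. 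The critical pair conditions \eqref{inv:2.2}, \eqref{inv:3.2}, \eqref{inv:3.3} require sums like $f(x)+f(y) \ge |V(G')|-1$; since each original list had size $\chi(G)$ and $|V(G')| \le 2\chi(G)-1$, the slack $2\chi(G) - |V(G')| \ge 1$ is exactly what absorbs the loss from the colour $c$ already used, and this is where the $+1$ in $|V(G)| \le 2\chi(G)+1$ is spent.

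The hard part, and the place where I would expect the argument to be most subtle, is guaranteeing that the two vertices $u, v$ sharing a colour can be chosen in \emph{different} parts (so that colouring both with $c$ is proper) and that after their removal the degraded parts can be slotted into $\mathcal{A}', \mathcal{B}', \mathcal{C}', \mathcal{S}'$ with an ordering for which \emph{every} inequality, including the index-dependent ones \eqref{inv:1} and \eqref{inv:1'}, holds simultaneously. If the only repeated colour forced by $|\bigcup_v L(v)| < |V(G)|$ happened to lie in a single part, colouring both its vertices with $c$ would be illegal, so I anticipate needing a short argument — possibly choosing $c$ and the two vertices greedily from the largest parts, or balancing against which parts still have generous lists — to ensure the saturation pattern of the inequalities lines up. Once the single ``two-in-one-colour'' move is justified, the remainder is the same verification discipline used for Theorem \ref{thm:ohba3}, now carried out against the tighter $|V(G')|-1$ bounds.
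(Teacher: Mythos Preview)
There is a genuine gap, and it stems from a reversed understanding of adjacency in a complete multipartite graph. In $K_{3\star k_3,2\star k_2,1\star k_1}$ the parts are \emph{independent sets}: two vertices are adjacent precisely when they lie in \emph{different} parts. So your plan to find a colour $c$ shared by two vertices $u,v$ in different parts and colour both with $c$ is exactly what you cannot do --- those vertices are adjacent. Conversely, the situation you call ``illegal'' (two vertices in the same part sharing a colour) is the one that \emph{is} legal, and is what the paper exploits.

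The paper's proof runs as follows. Take a minimal counterexample; by Theorem~\ref{thm:ohba3} one may assume $|V(G)|=2\chi(G)+1$, and by Lemma~\ref{lem:smallNbOfCol} that at most $2\chi(G)$ colours appear in total. Minimality first disposes of two easy reductions: if a size-$2$ part $\{u,v\}$ has $L(u)\cap L(v)\ne\emptyset$, colour both with a common colour and recurse (now $|V|$ drops by $2$ and $\chi$ by $1$); similarly no colour lies in all three lists of a size-$3$ part. Since $|V(G)|=2\chi(G)+1$ there is a size-$3$ part $\{u,v,w\}$, and pigeonhole on its three lists of size $\chi(G)$ against at most $2\chi(G)$ colours forces some $c\in L(u)\cap L(w)$. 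Colour $u$ and $w$ with $c$ (they are nonadjacent), set $G'=G-\{u,w\}$, and verify Lemma~\ref{k3k:induction} for $G'$ with $\mathcal S'=\emptyset$ and $\{v\}$ placed last in $\mathcal A'$. The two preliminary reductions are not cosmetic: the disjointness of lists on size-$2$ parts is exactly what makes \eqref{inv:2.2} go through (at most one vertex per size-$2$ part loses $c$), and the absence of a triple-common colour in size-$3$ parts is what makes \eqref{inv:3.3} go through (the left side drops by at most $2$). Your outline omits these reductions, and without them the verification of \eqref{inv:2.2} and \eqref{inv:3.3} would fail even after correcting the adjacency issue.
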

\begin{proof}
	For a contradiction let $G$ be a counterexample with minimum number of vertices.
Let $L$ be a $\chi(G)$-list assignment such that $G$ is not $L$-colourable.
By Theorem \ref{thm:off-line-independence-3}, we may assume that $|V(G)|=2\chi(G)+1$ and by Lemma \ref{lem:smallNbOfCol} we assume that the number of colours occurring on all the list is at most $2 \chi(G)$.
	
We can also assume that for every part $\{u,v\}$ of size $2$ the lists $L(u)$ and $L(v)$ are disjoint.
If not, then we pick a colour $c\in L(u)\cap L(v)$ and use it to colour both vertices. 
The remaining graph $G'=G-\set{u,v}$ still satisfies $|V(G')|\leq 2\chi(G')+1$. By the minimality of $G$,
 we know that $G'$ is chromatic-choosable. Hence $G'$ is colourable from $L-\set{c}$,  implying that $G$ is colourable from $L$,
 a contradiction. For the   same reason there is no colour that belongs to all three lists of vertices of any part of size 3 in $G$.
	
	As $|V(G)|=  2\chi(G)+1$ there  exists at least one part of size $3$ in $G$, say $\{u,v,w\}$.
Each vertex has a list of size $\chi(G)$ and the total number of colours is at most $2\chi(G)$,
therefore there  exists a colour $c$ which belongs to lists of two vertices from this part,
say $c\in L(u) \cap L(w)$.

We are going to construct an $L$-colouring of $G$ in two steps. First, we use $c$ to colour $u$ and $w$, remove them from $G$ and remove
colour $c$ from all lists. Than we prove that the remaining graph $G'=G-\set{u,w}$ is on-line $f'$-choosable, where
\[
	f'(v)= \begin{cases}
			\chi(G)   &\mbox{if } c \notin L(v),\\
			\chi(G)-1 & \mbox{if } c \in L(v).
		\end{cases}
\]
In particular, $G'$ can be coloured from $L-\set{c}$, which finishes the colouring of $G$ and gives the final contradiction.

The only thing we need to verify is that $G'$ and $f'$ satisfy the assumptions of Lemma \ref{k3k:induction} with $\mathcal{S}=\emptyset$ and parts from $\mathcal{A}$ ordered in such a way that the part $\set{v}$ has the greatest index. Let $k_1, k_2, k_3,k_1',k_2',k_3'$ denote the numbers of parts of size $1$, $2$ and $3$ in $G$ and $G'$, respectively. We have
\[
k'_1 =k_1+1,\qquad k'_2=k_2,\qquad k'_3 =k_3 -1.
\]
Inequalities \eqref{inv:2.1}, \eqref{inv:3.1} hold as for any $x$ in part of size $2$ or $3$ in $G'$
\[
f'(x) \geq \chi(G)-1 \geq k_3+k_2-1=k_3'+k_2'
\]
The part of size $1$, say $\set{x}$, with index less than $k'_1$ satisfies \eqref{inv:1} as
\[
f'(x) \geq \chi(G)-1=k_3'+k_2'+k_1'-1.
\]
The remaining part of size $1$, namely $\{v\}$, satisfies \eqref{inv:1} as $f(v)=\chi(G)=\chi(G')$ (as $c\not\in L(v)$). Inequalities \eqref{inv:2.2} hold since colour $c$ belongs to the list of at most one vertex in every part of size $2$ in $G'$. Therefore, for any $\set{x,y}$ part of size $2$ in $G'$ we have
\[
f'(x)+f'(y)\geq 2\chi(G)-1=\norm{V(G')}-1.
\]
It remains to verify inequalities \eqref{inv:3.2} and \eqref{inv:3.3}. Let $x$, $y$, $z$ be any three vertices forming a part of size $3$ in $G'$. Then
\begin{align*}
f'(x)+f'(y)&\geq 2\chi(G)-2=\norm{V(G')}-1,\\
f'(x)+f'(y)+f'(z)&\geq 3\chi(G)-2=\norm{V(G')}-1+k_3'+k_2'+k_1'.
\end{align*}
The latter inequality follows from the fact $c$ is not in all three $L(x)$, $L(y)$, $L(z)$.
\end{proof}

\subsection{The complete multipartite graph $K_{3\star k}$}

\noindent There are not so many graphs for which the exact value of a choice number is known.
%The graphs $K_{3\star k}$ are among those few graphs $G$ for which $\ch(G)$ are determined.
In \cite{Kierstead00}, Kierstead proved that $\ch(K_{3\star k}) = \lceil (4k-1)/3 \rceil$.
We present an alternative proof of this result once again using Hall Theorem encapsulated in Lemma \ref{lem:smallNbOfCol}.

\begin{theorem}[Kierstead 2000]\label{thmk}
For any positive integer $k$, $\chnol(K_{3\star k}) = \lceil \frac{4k-1}{3} \rceil$.
\end{theorem}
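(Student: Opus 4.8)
The plan is to prove the two inequalities separately. The lower bound $\chnol(K_{3\star k})\geq \lceil (4k-1)/3\rceil$ is precisely the result of Erd\H{o}s, Rubin and Taylor \cite{ERT79}, so I would simply quote it and concentrate on the matching upper bound $\chnol(K_{3\star k})\leq m$, where I abbreviate $m=\lceil (4k-1)/3\rceil$. Writing $G=K_{3\star k}$, so that $|V(G)|=3k$ and every part has size exactly $3$, the first move is to apply Lemma \ref{lem:smallNbOfCol}: it suffices to produce an $L$-colouring for every $m$-list assignment $L$ whose palette $\bigcup_{v}L(v)$ has fewer than $3k$ colours. This is the sole use of Hall's theorem, and it is exactly what allows the off-line choice number to drop below the on-line bound $\frac{3}{2}k$ of Theorem \ref{thm:multi3}: the saving from $\tfrac32 k$ down to roughly $\tfrac43 k$ comes from the palette restriction, which has no on-line analogue.

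The core of the argument is an off-line counterpart of Lemma \ref{k3k:induction}: a technical lemma asserting that a complete multipartite graph with all parts of size at most $3$ is $f$-choosable provided $f$ satisfies a suitable family of inequalities together with a bound on the size of the palette, proved by induction on $|V(G)|$. The key structural fact is that $G$ has independence number $3$, so every independent set lies inside a single part; hence each reduction step colours one, two, or all three vertices of a single part with a common colour $c$, deletes them, removes $c$ from the palette, and recurses on $G-I$ with lists $L(v)\setminus\{c\}$. When the palette is small relative to $m$ two vertices of a part are forced to share a colour and a size-$2$ block can be eliminated at once; otherwise one colours single vertices, shrinking a part gradually from size $3$ to $2$ to $1$. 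It is exactly this mixture of part sizes and the bookkeeping of the shrinking palette that the invariants — modelled on conditions \eqref{inv:1}--\eqref{inv:3.3} — are designed to track, and the induction step is organised into cases by the sizes of the parts meeting the chosen colour class, as in the ten-case analysis of Lemma \ref{k3k:induction}.

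With the technical lemma in hand, the theorem follows by instantiating it with $f\equiv m$, the partition of parts into size classes induced by $G$, and the palette bound $q<3k$ supplied by Lemma \ref{lem:smallNbOfCol}; the sum-type inequalities are certified from $q<3k$ and the averaging estimate $\sum_v|L(v)|=3km$, while the base case is an independent set, which is trivially $f$-choosable once every list is nonempty.

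The step I expect to be the main obstacle is, just as the authors flag for Lemma \ref{k3k:induction}, pinning down the correct invariants. The inequalities must be simultaneously weak enough to hold at the root from $f\equiv m$ with $m=\lceil (4k-1)/3\rceil$ and $q<3k$, and strong enough to be self-reproducing under every reduction — including the awkward steps that only shrink a part by one vertex while discarding a colour from the palette. In particular the floor/ceiling behaviour of $m$, namely that raising $k$ by $3$ raises $m$ by exactly $4$, must be absorbed into the constants of the sum-inequalities and into the palette budget so that removing a single colour never violates them. Getting these constants and the palette accounting right, rather than any individual calculation, is where the real difficulty lies.
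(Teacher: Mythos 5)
Your road map coincides with the paper's: quote the Erd\H{o}s--Rubin--Taylor lower bound, and for the upper bound invoke Lemma \ref{lem:smallNbOfCol} to restrict attention to list assignments with fewer than $|V(G)|$ colours, then prove an off-line, palette-aware analogue of Lemma \ref{k3k:induction} by induction. But the proposal stops exactly where the proof begins: the technical lemma is never stated, and you yourself flag ``pinning down the correct invariants'' as the main obstacle. Since the entire content of the argument is that system of inequalities and the verification that every reduction step reproduces them, this is a genuine gap, not a routine omission.

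Beyond that, the specific shape you sketch diverges from the paper's in a way that matters. You propose to allow parts of every size up to $3$ and to let a part shrink from $3$ to $2$ to $1$ one vertex at a time. The paper's off-line lemma deliberately admits only parts of size $1$ and $3$ (classes $\mathcal{A}$, $\mathcal{S}$, $\mathcal{C}$ with invariants \eqref{invK:1}--\eqref{invK:3.3}); a size-$3$ part is never reduced to a size-$2$ part. This is made possible by two devices you would need to discover: (a) the palette bound $|\bigcup_v L(v)|\leq 3k_3+k_1+s-1$ combined with \eqref{invK:3.3} forces, by pigeonhole, a colour lying in two of the three lists of some size-$3$ part, so two vertices can always be deleted together; and (b) in the residual case where $f(v)=k_3$ and $L(v)$ is disjoint from $L(u)\cup L(w)$, the palette bound yields $|L(u)\cap L(w)|\geq k_3+k_1$, and the part is replaced by a single \emph{merged} vertex $x$ with $L'(x)=L(u)\cap L(w)$, again avoiding any size-$2$ part. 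Without these moves, carrying size-$2$ parts alongside a shrinking palette requires additional invariants of type \eqref{inv:2.1}--\eqref{inv:2.2} whose preservation under colour deletion is exactly the bookkeeping you have not done, and it is not clear it closes. A smaller inaccuracy: the averaging identity $\sum_v|L(v)|=3km$ is not what certifies the sum inequalities at the root (there they are pure arithmetic in $m=\lceil(4k-1)/3\rceil$); the palette bound does its real work \emph{inside} the induction, where it must be maintained at every step and is what produces the repeated colour in the endgame after one shows $k_1=0$.
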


The  lower bound $\chnol(K_{3\star k}) \ge  \lceil \frac{4k-1}{3} \rceil$ was given by Erd\"{o}s, Rubin and Taylor \cite{ERT79}.
As the proof is very short, we include it here for the convenience of the reader.   Let $q=\lceil \frac{4k-1}{3} \rceil-1$.
Let $A, B, C$ be disjoint colour sets with $|A|=\lfloor q/2 \rfloor$ and $|B|=|C|=\lceil q/2 \rceil$. Assume the parts of $K_{3\star k}$
are $V_i=\{x_i,y_i,z_i\}$ for $i=1,2,\ldots, k$. Let $L(x_i) = A \cup B, L(y_i)=B \cup C$ and $L(z_i)=A \cup C$. Then
$|L(v)| \ge q$ for each vertex $v$, and if $f$ is an $L$-colouring of $K_{3\star k}$, then $f$ uses at least $2$ colours on $V_i$, and hence
the total number of used colours is at least $2k$. However, straightforward calculation shows that $|A \cup B \cup C| \le 2k-1$. Therefore
$K_{3\star k}$ is not $L$-colourable and hence $\chnol(K_{3\star k}) \ge q+1 = \lceil \frac{4k-1}{3} \rceil$.

The inequality  $\chnol(K_{3\star k}) \le \lceil \frac{4k-1}{3} \rceil$ is a straightforward consequence of the following lemma.

\begin{lemma}
Let $G$ be a complete multipartite graph with parts of size $1$ and $3$. Let  $\mathcal{A},$ $\mathcal{S}$, $\mathcal{C}$ be a partition of the set of parts of $G$ into classes  such that $\mathcal{A}$ and $\mathcal{S}$ contains only parts of size $1$, while $\mathcal{C}$ contains all parts of size $3$. Let $k_1, s, k_3$ denote the cardinalities of classes $\mathcal{A}$, $\mathcal{S}$, $\mathcal{C}$, respectively. Suppose that class $\mathcal{A}$ and $\mathcal{S}$ are ordered, i.e.\ $\mathcal{A}= (A_1, \ldots, A_{k_1})$
and $\mathcal{S}=(S_1, \ldots, S_s)$. If $f:V(G)\to \NN$ is a function for which the following conditions hold
\begin{alignat}{2}
f(v)&\geq k_3+i, &&\quad\text{for all $1\leq i \leq k_1$ and $v\in A_i$}\tag{1}\label{invK:1}\\
f(v)&\geq 2 k_3+k_1+i, &&\quad\text{for all $1\leq i \leq s$ and $v\in S_i$}\tag{1'}\label{invK:1*}\\
f(v)&\geq k_3, &&\quad\text{for all $v\in C\in\mathcal{C}$}\tag{3.1}\label{invK:3.1}\\
f(u)+f(v)&\geq 2 k_3 + k_1, &&\quad\text{for all $u,v\in C\in\mathcal{C}$}\tag{3.2}\label{invK:3.2}\\
\sum_{v\in C}f(v)&\geq 4 k_3+2k_1+s-1, &&\quad\text{for all $C\in\mathcal{C}$}, \tag{3.3}\label{invK:3.3}
\end{alignat}
then $G$ is $f$-choosable.	
\end{lemma}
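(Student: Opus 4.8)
The plan is to prove the lemma by induction on $\norm{V(G)}$, following closely the case analysis in the proof of Lemma \ref{k3k:induction}. After setting $k_2=0$, the hypotheses \eqref{invK:1}, \eqref{invK:1*}, \eqref{invK:3.1}, \eqref{invK:3.3} coincide exactly with conditions (1), (1'), (3.1), (3.3) of Lemma \ref{k3k:induction}, whereas \eqref{invK:3.2} is strictly weaker than the on-line requirement $f(u)+f(v)\geq\norm{V(G)}-1$. For the base case I would take $k_3=0$: then $G$ is a complete graph on singleton parts, the surviving hypotheses are precisely (1), (1') of Lemma \ref{k3k:induction}, and so $G$ is even on-line $f$-choosable, hence $f$-choosable.

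For the inductive step I would first invoke the $f$-analogue of Lemma \ref{lem:smallNbOfCol} to reduce to a list assignment $L$ with $q := \norm{\bigcup_v L(v)} < \norm{V(G)}$. Fixing a colour $c$ and colouring an independent set inside $U_c := \set{v : c \in L(v)}$ is exactly the off-line counterpart of a single move with $U=U_c$ and new requirement $f'=f-\delta_{U_c}$; thus, once $c$ is chosen, checking that $G'=G-I$ satisfies the hypotheses is the same computation as the corresponding case of Lemma \ref{k3k:induction}. The one case there that genuinely used the strong bound $f(u)+f(v)\geq\norm{V(G)}-1$ --- colouring two vertices $u,v$ of a size-$3$ part $C=\set{u,v,w}$ and reclassifying $w$ as a new singleton --- is where the off-line freedom enters: since $q<\norm{V(G)}$ and \eqref{invK:3.3} gives $\sum_{x\in C}f(x)\geq 4k_3+2k_1+s-1>q$, some colour lies in two of the three lists of $C$, and colouring that pair realises exactly this reduction without appeal to the strong inequality.

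The main obstacle I anticipate is the bookkeeping for the reduced instance after this last move: with only the weak \eqref{invK:3.2} available, I must still certify \eqref{invK:1} for the new singleton $\set{w}$ and \eqref{invK:3.2}, \eqref{invK:3.3} for the surviving size-$3$ parts, each of which I expect to hold with at most one unit of slack. The delicate choices are which vertex of $C$ survives (I would keep the one with the largest list, so that $f(w)$ suffices to insert $\set{w}$ at the right position in the order of $\mathcal{A}$) and which shared colour $c$ to spend (I would pick one not simultaneously shared inside another size-$3$ part, so that deleting $c$ costs each surviving pair at most one unit and \eqref{invK:3.2} is preserved). The counting behind these choices is driven by the excess $\sum_{x\in C}f(x)-q\geq k_3+k_1$ forced by \eqref{invK:3.3}, which supplies enough shared colours for a safe selection; and when $k_1$ is so large that no size-$3$ survivor can carry the required list, the case analysis should instead direct me to colour a singleton part first, exactly as in the corresponding cases of Lemma \ref{k3k:induction}.
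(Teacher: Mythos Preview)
Your overall architecture is right and matches the paper: induction on $\norm{V(G)}$, reduction via Lemma~\ref{lem:smallNbOfCol} to a list assignment with fewer than $\norm{V(G)}$ colours, and then spending a colour shared by two vertices of some triple $C\in\mathcal{C}$ to drop $k_3$ by one. The paper, however, does \emph{not} try to select a ``safe'' colour as you propose. Instead it works with a minimal counterexample and first establishes four structural claims that make \emph{any} shared colour work:
\begin{itemize}
\item (Claim 0) no colour is common to all three lists of a triple --- this is what bounds the loss in \eqref{invK:3.3} by $2$;
\item (Claim 1) whenever \eqref{invK:3.2} is tight for a pair $\{x,y\}$, the lists $L(x),L(y)$ are disjoint --- this is what saves \eqref{invK:3.2} after deleting a colour, without any clever choice of $c$;
\item (Claim 2) \eqref{invK:3.1} is never tight --- this is what guarantees the surviving vertex $w$ satisfies \eqref{invK:1} as the new $A_1'$;
\item (Claim 3) $k_1=0$ --- so the new singleton only needs $f'(w)\geq k_3'+1$, not $k_3'+k_1'$.
\end{itemize}

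Your two proposed workarounds do not quite close these gaps. Picking $c$ ``not simultaneously shared inside another size-$3$ part'' is exactly the issue Claim~1 is designed to avoid, and you give no argument that such a $c$ exists; the excess $\sum_{x\in C}f(x)-q\geq k_3+k_1$ counts repetitions inside $C$, not how those colours interact with tight pairs in other triples. Keeping ``the vertex with the largest list'' is not a free choice either: you must colour the pair that actually shares a colour, so the survivor is forced on you, and you still need something like Claim~0 to know $c\notin L(w)$. Most importantly, you do not anticipate Claim~2, whose proof is the one genuinely new ingredient here: when $f(v)=k_3$ and $L(v)$ is disjoint from $L(u)\cup L(w)$, the paper \emph{merges} $u$ and $w$ into a single new vertex with list $L(u)\cap L(w)$ (of size at least $k_3+k_1$ by counting), a move with no analogue in the on-line Lemma~\ref{k3k:induction}. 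Your plan, as stated, would stall at exactly this configuration.
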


\begin{proof}
Assume the lemma is not true. Let $G$ be a multipartite graph with parts divided into $\mathcal{A}$, $\mathcal{S}$, $\mathcal{C}$, and let $f$ be a function fulfilling the inequalities \eqref{invK:1}-\eqref{invK:3.3}  while $G$ is not $f$-choosable. Moreover, suppose $G$ is a counterexample with the minimum possible number of vertices. By Lemma \ref{lem:smallNbOfCol} there exists a list assignment $\set{L(v)}_{v\in V(G)}$ with each $\norm{L(v)}=f(v)$ and $|\bigcup_{v\in V(G)} L(v) | \leq |V(G)|-1 = 3k_3+ k_1 +s-1$ such that $G$ is not $L$-colourable.

The claims below prove a series of properties of $G$ and list assignment $L$. In the arguments we often make use the minimality of $G$ and consider some smaller graphs with modified list assignment. The modified graph will be denoted by $G'$ and, unless otherwise stated, the classes of its vertices  $\mathcal{A'}$, $\mathcal{S'}$ and $\mathcal{C'}$, together with  orders on $\mathcal{A'}$ and $\mathcal{S'}$, are inherited from $G$. The parameters $k_1',s',k_3'$ correspond to the analogous parameters of $G'$. The modified list assignment is  going to be denoted by $L'(v)$ and $f'(v)=\norm{L'(v)}$ for all $v\in V(G')$.
	
\smallskip

\claim{0}{For any $C\in\mathcal{C}$ we have $\bigcap_{v\in C}L(v)=\emptyset$.}
\begin{proof}
Suppose there is $C\in\mathcal{C}$ with $c\in\bigcap_{v\in C}L(v)$. We colour all vertices of $C$ with $c$ and consider the smaller graph $G'=G-C$ with list assignment $L'(v)=L(v)-\set{c}$. It is easy to verify that $G'$ (with $\mathcal{A'}$, $\mathcal{S'}$, $\mathcal{C'}$ inherited from $G$) and $f'$ satisfies the assumptions of the lemma. By the minimality of $G$,   $G'$ is $L'$-colourable. This implies that   $G$ is $L$-colourable, in contrary to our assumption.
\end{proof}

\claim{1}{For any $u,v\in C\in\mathcal{C}$ if $f(u)+f(v)= 2 k_3 +k_1$, then $L(u)\cap L(v)= \emptyset$.}
\begin{proof}
Suppose that for some part $C= \{u,v,w\}$ we have $f(u)+f(v)= 2 k_3 +k_1$ and there exist $c \in L(u)\cap L(v)$.
Then we colour $u$ and $v$ with $c$, and consider the smaller graph $G'=G-\set{u,v}$ with lists $L'(x)=L(v)-\set{c}$ for all $x\in V(G')$. The partition $\mathcal{A'}$, $\mathcal{C'}$ is inherited from $G$ and $\mathcal{S'}=(\set{w},S_1,\ldots,S_{s})$ has one more part, namely $\set{w}$, while all other parts have shifted index, i.e., $S_{i+1}'=S_{i}$ for $1\leq i\leq s$. In particular, $k_1'=k_1$, $s'=s+1$, $k_3'=k_3-1$. Note that the inequality \eqref{invK:1*} holds for $S'_{1}=\set{w}$ as
\[
f'(w)=f(w)\geq (4k_3+2k_1+s-1) - (2k_3+k_1) = 2k_3+k_1+s-1 = 2k_3'+k_1'+1,
\]
and \eqref{invK:1*} holds for $S'_{i+1}=S_{i}=\set{x}$ for $1\leq i\leq s$ as
\[
f'(x)\geq f(x)-1\geq (2k_3+k_1+i)-1=2k_3'+k_1'+i+1.
\]
Again, it is easy to verify that $G'$ with $f'$ satisfies the assumptions of the lemma. Hence  $G'$ is $L'$-colourable, implying that $G$ is $L$-colourable, a contradiction.
\end{proof}	
\claim{2}{For any $v\in C\in\mathcal{C}$ we have $f(v)>k_3$, i.e., the inequality \eqref{invK:3.1} is not tight.}
\begin{proof}
In order to get a contradiction suppose that $\set{v,u,w}=C\in\mathcal{C}$ and $f(v)=k_3$. We separate the argument into two cases:
\begin{itemize}
\item $L(v) \cap (L(u)  \cup L(w))\neq \emptyset$. Without loss of generality, assume that $L(v) \cap L(u) \neq \emptyset$.
Let $c\in L(v) \cap L(u)$. We colour $u$ and $v$ with $c$, and consider the smaller graph $G'=G-\set{u,v}$ with lists $L'(x)=L(x)-\set{c}$ for all $x\in V(G')$. The partition $\mathcal{S'}$, $\mathcal{C'}$ is inherited from $G$ and $\mathcal{A'}=(A_1,\ldots,A_{k_1},\set{w})$ has one more part, namely $\set{w}$, appended to the inherited ordering. In particular, $k_1'=k_1+1$, $s'=s$, $k_3'=k_3-1$. Note that the inequality \eqref{invK:1} holds for $A'_{k_1'}=\set{w}$ as
\[
f'(w)=f(w)>(2k_3+k_1)-k_3=k_3'+ k_1'.
\]
Let $x,y\in C\in\mathcal{C'}$. Inequality \eqref{invK:3.2} for $x$ and $y$ hold as either $f(x)+f(y)>2k_3+k_1$ and therefore
\[
f'(x)+f'(y)\geq f(x)+f(y)-2 > 2k_3+k_1-2 = 2k_3'+k_1'-1,
\]
or $f(x)+f(y)=2k_3+k_1$ and therefore by Claim 1 $L(x)$ and $L(y)$ are disjoint.
\[
f'(x)+f'(y)\geq f(x)+f(y)-1 =2k_3+k_1-1=2k_3'+k_1'.
\]
With these observations, it is easy to verify that $G'$ with $f'$ satisfies the assumptions of the lemma. Hence $G'$ is $L'$-colourable and therefore $G$ would be $L$-colourable, a contradiction.
\item $L(v) \cap (L(u) \cup L(w))= \emptyset$. Then by \eqref{invK:3.3} and our assumption $f(v)=k_3$ we get that
\[
f(u)+f(w)\geq (4k_3+2k_1+s-1)-k_3=3k_3+2k_1+s-1.
\]
On the other hand the total number of colours is at most $3 k_3 + k_1 +s-1$ and as $L(v)$ is disjoint with $L(u)\cup L(w)$ we get $\norm{L(u) \cup L(w)} \leq 2 k_3 +k_1+s -1$. Combining the two  inequalities above we obtain
\[
\norm{L(u) \cap L(w)} \geq k_3 + k_1.
\]
We colour vertex  $v$ by any colour $c\in L(v)$. Then we consider graph $G'=G-\set{v,u,w}+\set{x}$, where $x$ is a brand new vertex which is convenient to be seen as a merger of $u$ and $w$. Let $L'(y)=L(y)-\set{c}$ for all $y\in V(G')-\set{x}$ and $L'(x)=L(u)\cap L(w)$. The partition $\mathcal{S'}$, $\mathcal{C'}$ is inherited from $G$ and $\mathcal{A'}=(A_1,\ldots,A_{k_1},\set{x})$ has one more part, namely $\set{x}$, appended to the inherited ordering. In particular, $k_1'=k_1+1$, $s'=s$, $k_3'=k_3-1$. Note that the inequality \eqref{invK:1} holds for $A'_{k_1'}=\set{x}$ as
\[
f'(x)=\norm{L(u) \cap L(w)} \geq k_3 + k_1=k_3'+k_1'.
\]
The other inequalities for $G'$ and $f'$ hold for the same reasons as before.  So $G'$ is $L'$-colourable.
We obtain an $L$-colouing of $G$, by colouring the vertices $u$ and $w$ with the colour of $x$ and colouring $v$ with $c$, a contradiction.
	\end{itemize}
\end{proof}
\claim{3} {$k_1=0$.}
\begin{proof}
Suppose that $k_1\neq 0$. Then let $A_1=\set{v}$. We colour $v$ with any colour $c\in L(v)$ and consider the smaller graph $G'=G-\set{v}$ with lists $L'(x)=L(x)-\set{c}$ for all $x\in V(G')$. The partition $\mathcal{A'}=(A_2,\ldots,A_{k_1})$, $\mathcal{S'}$, $\mathcal{C'}$ is inherited from $G$. Note that $\mathcal{A'}$ has one part less and $k_1'=k_1-1$, $s'=s$, $k_3'=k_3$. Now, we verify the inequalities \eqref{invK:1}-\eqref{invK:3.3} for $G'$ and $f'$:
\begin{itemize}
	\item \eqref{invK:1} holds as the indices of parts are decreased, i.e. $A_i'=A_{i+1}$ for $1\leq i < k_1'$;
	\item \eqref{invK:1*} holds as $k_1$ decreases,
	\item \eqref{invK:3.1} holds as, by Claim 2, it is not tight in $G$,
	\item \eqref{invK:3.2} holds for $x,y\in C\in\mathcal{C'}$ as $k_1$ decreases and either \eqref{invK:3.2} is not tight for $u$, $v$ in $G$, or $f'(x)+f'(y)\geq f(x)+f(y)-1$ (by Claim 1);
	\item \eqref{invK:3.3} holds as $k_1$ decreases by 1 and the left hand side decreases by at most $2$ (by Claim 0).
\end{itemize}
Once again by minimality of $G$ we get that $G'$ is $f'$-choosable, and that gives that $G$ is $L$-colourable, a contradiction.
\end{proof}

We are now ready to derive the final contradiction. If $k_3=0$ then $G$ has only parts of size $1$ in $\mathcal{S}$ and it is immediate that $G$ is $f$-choosable. Assume $k_3 \ne 0$. Recall that the total number of colors in all lists is at most $3k_3+s-1$. Let $\{u,v,w\}$ be a part of size $3$.
Then $f(u)+f(v)+f(w) \ge 4k_3+s-1>3k_3+s-1$ and therefore there must be a colour $c$ which appears in two out of three colour sets $L(u)$, $L(v)$, $L(w)$, say $c\in L(u)\cap L(v)$.

We colour $u$ and $v$ with $c$ and consider $G'=G-\set{u,v}$ with lists $L'(x)=L(x)-\set{c}$. Again, the partition $\mathcal{S'}$, $\mathcal{C'}$ is inherited from $G$ and we simply put $\mathcal{A'}=(\set{w})$. Thus, $k_1'=1$, $s'=s$, $k_3'=k_3-1$. We verify the inequalities \eqref{invK:1}-\eqref{invK:3.3} for $G'$ with $f'$. The inequality \eqref{invK:1} for $A_1'=\set{w}$ holds as
\[
f'(w)=f(w)>k_3=k_3'+1.
\]
All the other inequalities hold for analogous reasons as before. Once again, by minimality of $G$, we get that $G'$ is $f'$-choosable, and that gives that $G$ is $L$-colourable, a contradiction.
\end{proof}

%It follows from Lemma \ref{k3k:induction} that for any $k \ge 1$,
%$\choice(K_{3\star k})\leq \frac{3}{2}k$. The exact value of $\olch(K_{3 \star k})$ remains unknown.

\bibliographystyle{siam}
\bibliography{ohba}
\end{document}